\newtheorem{theorem}{Theorem}[section]
\newtheorem{lemma}[theorem]{Lemma}
\newtheorem{corollary}[theorem]{Corollary}
\theoremstyle{definition}
\newtheorem{definition}[theorem]{Definition}
\newtheorem{remark}[theorem]{Remark}
\numberwithin{equation}{section}
\newcommand{\Z}{\Bbb Z}
\newcommand{\C}{\Bbb C}
\newcommand{\R}{\Bbb R}
\newcommand{\K}{\Bbb K}
\newcommand{\T}{\Bbb T}
\newcommand{\F}{\Bbb F}
\newcommand{\N}{\Bbb N}
\renewcommand{\P}{{\rm P}}
\newcommand{\SP}{\mbox{{\rm SP}}}
\newcommand{\RP}{\Bbb R\mbox{{\rm P}}}
\newcommand{\Map}{\mbox{{\rm Map}}}
\newcommand{\Rat}{\mbox{{\rm Rat}}}
\newcommand{\CP}{\Bbb C {\rm P}}
\newcommand{\dis}{\displaystyle}
\newcommand{\p}{\prime}
\newcommand{\Po}{\mbox{{\rm Poly}}}
\newcommand{\SZ}{{\mathcal{X}}^{d}}
\newcommand{\I}{\mbox{{\rm (i)}}}
\newcommand{\II}{\mbox{{\rm (ii)}}}
\newcommand{\III}{\mbox{{\rm (iii)}}}
\newcommand{\IV}{\mbox{{\rm (iv)}}}
\newcommand{\Q}{\mbox{{\rm Q}}}
\newcommand{\pol}{\mbox{{\rm Pol}}}
\title{\bf The homotopy type of spaces of real resultants
with bounded multiplicity}
\author{Andrzej Kozlowski\footnote{%
Institute of Applied Mathematics and Mechanics,
University of Warsaw, Banacha 2, 02-097 Warsaw, Poland
(E-mail: akoz@mimuw.edu.pl)
}
\  and \ 
Kohhei Yamaguchi\footnote{%
Department of Mathematics,
University of Electro-Communications,  Chofu, Tokyo 182-8585, Japan
(E-mail: kohhe@im.uec.ac.jp);
The second author is supported by 
JSPS KAKENHI Grant Number 26400083.
\newline
\quad 2010 {\it Mathematics Subject Classification.} Primary 55P10; Secondly 55R80, 55P35.}}
\date{}
\begin{document}
\maketitle

\begin{abstract}
For positive integers $d,m,n\geq 1$ with
$(m,n)\not= (1,1)$
and $\K=\R$ or $\C$,
let $\Q^{d,m}_{n}(\K)$ denote the space of $m$-tuples
$(f_1(z),\cdots ,f_m(z))\in \K [z]^m$ of $\K$-coefficients monic polynomials of the same degree $d$ such that polynomials $\{f_k(z)\}_{k=1}^m$ have no common
{\it real} root of multiplicity $\geq n$
(but may have complex common root of any multiplicity).
These spaces can be regarded as one of generalizations
of the spaces defined and
studied by Arnold and
Vassiliev  \cite{Va}, and
they may be also considered as the
 {\it real} analogues of the spaces
studied by B. Farb and J. Wolfson \cite{FW}.
In this paper, we shall determine their homotopy types explicitly and
generalize the previous results obtained in \cite{Va} and \cite{KY1}.
\end{abstract}

\section{Introduction}\label{section 1}

\paragraph{Spaces of polynomials and the motivation.}
The principal motivation for this paper is derived from the 
two results obtained by Vassiliev \cite{Va} and Farb-Wolfson
\cite{FW}.
\par
Vassiliev \cite{Va} described a general method  for calculating
cohomology of certain spaces of polynomials (more precisely,
\lq\lq complements of discriminants\rq\rq ) by using a spectral sequence (to which and its variants we shall refer 
as the Vassiliev
 spectral sequence).
The most relevant example for us is the following.  For $\K=\R$ or $\C$,
let
$\P^d_n(\K)$  denote the space of
all $\K$-coefficients monic polynomials $f(z)\in \K [z]$ of degree $d$ which have
no {\it real} root of multiplicity $\geq n$
(but may have complex ones of arbitrary multiplicity).
By  identifying 
$S^1=\R\cup \{ \infty\}$ and $\C =\R^2$, we have
{\it the jet map}
\begin{equation}\label{eq: jet m=1}
j^{d,1}_{n,\K}:\P^d_n(\K) \to \Omega_{[d]_2}\RP^{d(\K)n-1}\simeq \Omega S^{d(\K)n-1}
\end{equation}
defined  by 
\begin{equation*}
j^{d,1}_{n,\K}(f(z))(\alpha )=
\begin{cases}
[f(\alpha ):f(\alpha )+f^{\p}(\alpha):
\cdots :f(\alpha )+f^{(n-1)}(\alpha)] & \mbox{if }\alpha\in\R
\\
[1:1:1:\cdots :1] &\mbox{if }\alpha =\infty
\end{cases}
\end{equation*}
for $(f(z),\alpha)\in\P^d_n(\K)\times S^1$, 
where $[d]_2\in \{0,1\}$ is the integer $d$ mod $2$, 
and
$d(\K)$ denotes the positive integer defined by
\begin{equation}
d(\K) = \dim_{\R}\K=
\begin{cases}
1 & \mbox{ if }\K =\R 
\\
2 & \mbox{ if }\K =\C 
\end{cases}
\end{equation}
For $\K =\R$, Vassiliev \cite{Va} obtained the following result:
\begin{theorem}[\cite{Va} (cf. \cite{GKY2}, \cite{KY1})]\label{thm: 1.1}
The jet map
$j^{d,1}_{n,\R}:\P^d_n(\R) \to \Omega_{[d]_2}\RP^{n-1}$ is a
homotopy equivalence through dimension $(\lfloor \frac{d}{n}\rfloor+1)(n-2)-1$ for $n\geq 4$ and a homology equivalence through dimension
$\lfloor \frac{d}{3}\rfloor$ for $n=3$, where
$\lfloor x\rfloor$ denotes the integer part of a real number $x$.
\qed
\end{theorem}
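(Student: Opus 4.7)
The strategy follows Vassiliev's general method: compute the cohomology of $\P^d_n(\R)$ via Alexander duality applied to a non-degenerate simplicial resolution of its discriminant, and compare the resulting spectral sequence with an analogous one for the loop space target of the jet map.

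First, identify the affine space of monic real polynomials of degree $d$ with $\R^d$, so that $\P^d_n(\R)$ is the complement of the closed discriminant variety
\[
\Sigma^d_n = \{f\in\R^d : f \text{ has some real root of multiplicity}\geq n\}.
\]
Alexander duality in $S^d=(\R^d)^+$ rewrites $\tilde H^{k}(\P^d_n(\R);\Z)$ in terms of $\tilde H_{d-k-1}((\Sigma^d_n)^+;\Z)$ (with appropriately twisted coefficients), reducing the problem to the homology of the discriminant. I would then build the non-degenerate simplicial resolution $\pi\colon\mathcal{X}\to\Sigma^d_n$ by placing over each $f\in\Sigma^d_n$ the simplex spanned by its real roots of multiplicity $\geq n$ inside a fixed ambient simplex on $\R$; the map $\pi$ is a proper homotopy equivalence. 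Filtering $\mathcal{X}$ by the number $k$ of such roots produces a Vassiliev-type spectral sequence whose $E^1$-page is read off from Thom spaces of explicit real vector bundles over the open configuration spaces $F(\R,k)$. Since $F(\R,k)$ decomposes into $k!$ contractible cells, the $E^1$-page is essentially combinatorial, and a direct dimension count shows the first possible contribution beyond the stable range occurs in total degree $(\lfloor d/n\rfloor+1)(n-2)$, producing the numerical bound in the statement.

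Second, the target $\Omega_{[d]_2}\RP^{n-1}$ carries a parallel spectral sequence, either via the James-type filtration of $\Omega S^{n-1}$ composed with the double cover $S^{n-1}\to\RP^{n-1}$, or via a stabilised version of the same simplicial-resolution construction applied to an infinite analogue. The jet map $j^{d,1}_{n,\R}$ records at each $\alpha\in S^1$ the truncated Taylor jet of $f$, and is filtration-preserving essentially by definition: the preimage of the $k$-th stratum on the target is exactly the stratum of polynomials with at most $k$ bad real roots. Comparing $E^1$-terms via the resulting Thom isomorphisms then forces the induced morphism of spectral sequences to be an isomorphism through the indicated total degree, yielding the homology equivalence in the range $*\leq\lfloor d/3\rfloor$ when $n=3$ and through the larger range $(\lfloor d/n\rfloor+1)(n-2)-1$ in general.

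Third, upgrade homology to homotopy: for $n\geq 4$ the target $\RP^{n-1}$ is simply connected and the bottom Vassiliev stratum contributes nothing to $\pi_1$ of the source, so both spaces are simply connected through the relevant range and Whitehead's theorem converts the homology equivalence into a homotopy equivalence. For $n=3$ one has $\pi_1(\RP^{2})=\Z/2$, which obstructs this upgrade and explains why only a homology equivalence (in the narrower range $\lfloor d/3\rfloor$) is asserted. The main technical obstacle will be the precise identification of the $E^1$-terms on both sides, in particular matching the twisted orientation systems on the Thom bundles over $F(\R,k)$ with the corresponding sign conventions on the James-type filtration of the loop space; controlling these sign and twisting subtleties, rather than any individual homology computation, is the crux of the argument.
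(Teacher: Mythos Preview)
Your first and third steps are sound and match the paper's argument (Sections~2--3 for the simplicial resolution and Vassiliev spectral sequence; Lemma~6.1 for the simple-connectivity upgrade when $n\geq 4$). The $E^1$-computation via Thom spaces over $C_k(\R)$ and the resulting dimension count are exactly what the paper does in Lemmas~3.2--3.3 and Corollary~3.4.

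The gap is in your second step. You claim the jet map is ``filtration-preserving essentially by definition'' and thereby induces a morphism between the Vassiliev spectral sequence on the source and a James-type spectral sequence on the target. But the Vassiliev filtration lives on the simplicial resolution of the \emph{discriminant} $\Sigma^d_n$, and reaches $H_*(\P^d_n(\R))$ only through Alexander duality, which is contravariant: to get a map of spectral sequences one needs a map of discriminants (or their resolutions) going the \emph{other} way. The jet map is defined only on the complements and does not furnish such a map. Moreover, the James filtration on $\Omega S^{n-1}$ is a genuine filtration of the space itself, of a completely different origin, and there is no direct geometric reason the jet map should intertwine it with the duality-transported Vassiliev filtration. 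So even after computing both $E^1$-pages and seeing that they agree abstractly in the stated range, you have not shown that $j^{d,1}_{n,\R}$ \emph{realises} this isomorphism.

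The paper (following the references it cites for this theorem) avoids this problem by a two-step detour. First, the \emph{stabilization maps} $s^{d}:\P^d_n(\R)\to\P^{d+1}_n(\R)$ do extend to open embeddings of discriminants (see~(4.1)) and hence to filtration-preserving maps of simplicial resolutions; the spectral-sequence comparison of Lemma~4.2 then shows these are homology equivalences through the required range (Theorem~4.3). Second, a horizontal scanning argument (Section~5, Theorem~4.5) identifies the stable colimit $\P^{\infty}_n(\R)$ with the loop space, and the jet map factors through this identification up to homotopy. Combining the two gives Corollary~4.6, from which the theorem follows. Your parenthetical alternative, ``a stabilised version of the same simplicial-resolution construction applied to an infinite analogue,'' is in fact the correct route; the missing ingredient is precisely the scanning argument that connects the infinite analogue to $\Omega S^{n-1}$ in a way compatible with $j^{d,1}_{n,\R}$.
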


\begin{remark}
Let $X$ and $Y$ be based connected spaces.
Then
a based map $f:X\to Y$
is called {\it a homotopy equivalence}
(resp. {\it a homology equivalence})  
{\it through dimension} $N$
if the induced homomorphism
$
f_*:\pi_k(X)\to\pi_k(Y)
\quad
(\mbox{resp. }f_*:H_k(X;\Z) \to H_k(Y;\Z))
$
is an isomorphism for any integer $k\leq N$.
Similarly, when $G$ is a group and $f:X\to Y$ is a
$G$-equivariant map between $G$-spaces $X$ and $Y$, the map $f$ is called
a {\it $G$-equivariant homotopy equivalence through dimension} $N$
(resp. a {\it $G$-equivariant homology equivalence through dimension} $N$)
if the restriction map $f^H=f\vert X^H:X^H\to Y^H$ is a
homotopy  equivalence through dimension $N$ 
(resp. a homology equivalence through dimension $N$) for any subgroup
$H\subset G$, where $W^H$ denote the $H$-fixed subspace of a $G$-space $W$ given by
$W^H=\{x\in W: h\cdot x=x\mbox{ for any }h\in H\}.$
\qed
\end{remark}
\par
Next, recall the recent result obtained by Farb and Wolfson \cite{FW}. 
For positive integers $m,\ n\geq 1$ with $(m,n)\not= (1,1)$ and
a field $\F$ with its algebraic closure $\overline{\F}$,
let $\Po^{d,m}_n(\F)$ denote the space
of $m$-tuples $(f_1(z),\cdots ,f_m(z))\in \F [z]^m$
of $\F$-coefficients monic polynomials of the same degree $d$
such that the
polynomials $\{f_k(z)\}_{k=1}^m$ have no common root in
$\overline{\F}$ with multiplicity $\geq n$.
They studied the spaces 
$\Po^{d,m}_n(\F)$ from the point of view
of algebraic geometry
in the case when $\F=\mathbb{C}$ or $\F$ is a finite field $\F_q$. 
If $n=1$ and $\F =\C$, the space
$\Po^{d,m}_1(\C)$ can be identified with the space
$\Rat_d^*(\C \P^1,\C \P^{m-1})$  of all based rational maps
$f:\C \P^1\to \C \P^{m-1}$ of the degree $d$ (it is well known that in this case, rational maps coincide with holomorphic ones). 
The space of holomorphic maps appears in various applications and has been quite extensively studied (e.g. \cite{GKY4}, \cite{Se}).  
\par
In a different context, the same is true for the space $\Po^{d,1}_n(\C)$ 
(for $m=1$)
of monic  polynomials $f(z)\in \C [z]$ of the degree $d$ without $n$-fold roots. 
This space can be viewed as the space of polynomial functions \lq\lq without complicated singularities\rq\rq 
\ and thus plays an important role in singularity theory (\cite{Ar}, \cite{Va}). 
The fact that these two spaces were closely related was noted  and it was shown 
in \cite{Va} and \cite{CCMM} that they were stably homotopy equivalent.  
Recently this result was much improved  and  it was proved in \cite{KY8} 
(cf. \cite{GKY4}) that
there is a homotopy equivalence 
\begin{equation}\label{equ: KY8}
\Po^{d,m}_n(\F)
\simeq 
\Po^{\lfloor \frac{d}{n}\rfloor ,mn}_1(\F)
\quad \mbox{for }\F=\C \mbox{ and }
mn\geq 3.
\end{equation}
 
\par
Farb and Wolfson showed that some of these topological results have algebraic analogues when $\F$ is a finite field $\F_q$. This leads them to ask if these spaces in (\ref{equ: KY8}) are isomorphic as algebraic varieties over arbitrary fields $\F$. The affirmative answer would imply that the underlying topological spaces in the cases $\F=\Bbb C$ and $\F=\Bbb R$ are homeomorphic.\footnote{%
Recently H. Spink and D. Tseng showed that they are not isomorphic as varieties in
\cite{ST}.
}
Curtis McMullen indeed has shown that this is true in the simplest non-trivial case $(d,n)=(1,2)$
by constructing an explicit isomorphism.
Moreover, 
if $\F=\R$ and $n=1$, the space 
$\Po^{d,m}_1(\R)$
is precisely the space 
$\Rat_d^*(\F \P^1,\F \P^{m-1})$ of real rational functions considered by Segal in \cite{Se}. 
This space is an algebraic variety over $\Bbb R$ and one can ask if it is isomorphic (homeomorphic or homotopy equivalent?) to the variety
$\Po^{md,1}_m(\R)$ 
of real monic polynomials of degree $m d$ without $m$ fold complex roots. Of course, the affirmative answer to the 
Farb-Wolfson question would also imply this.
However, we have not been able to establish even homotopy equivalence in this case, 
and we will not consider this problem here. 
\par\vspace{2mm}\par
On the other hand, there is another space of real rational maps  
which can be viewed as the real analogue of the space of complex ones and it was first studied by Mostovoy 
in \cite{Mo1}. 
 Every such map can be represented by $n$-tuples of monic real polynomials of the same degree $d$ without a common real root (but possibly with common non-real roots).
However, in this situation the space of rational maps and the space of tuples of polynomials are 
different and we need to distinguish them.%
 \footnote{%
 For example,
 let $(f_1(z),f_2(z),f_3(z))\in \R [z]^3$ be a $3$-tuple of monic polynomials of the same degree $d$ without common real root.
 Then   the two $3$-tuples
 $F=((z^2+1)f_1(z),(z^2+1)f_2(z),(z^2+1)f_3(z))$
and  $G=((z^2+z+1)f_1(z),(z^2+z+1)f_2(z),(z^2+z+1)f_3(z))$ represent 
the same base-point preserving rational map
from $S^1$ to $\RP^2$, although $F\not= G$.
 Although it seems very likely to be true, 
 it has not been proved that the space of rational maps 
 and the space of tuples of polynomials are homotopy equivalent. 
}
 This space contains Segal's space of rational functions and is its closure in the space of  all continuous maps.  
 By analogy with the complex case, one can expect this space to be homotopy equivalent to the space of real 
 monic polynomials of degree $n d$ which do not have real roots of multiplicity $\geq n$. 
This is indeed true as was shown in \cite{KY1} (cf. \cite{Y5}). 
The spaces involved are not algebraic varieties; so the result is not implied by the positive answer to the Farb-Wolfson question. 
\par\vspace{1mm}\par
\par
The main purpose of this article is to generalize this result given in \cite{KY1}
for the space $\Q^{d,m}_n(\K)$, of $m$-tuples 
$(f_1(z),\cdots ,f_m(z))\in \K [z]^m$
of monic $\K$-coefficients polynomials of the same degree $d$, without $n$-fold common {\it real} roots (for $\K=\Bbb C$ of $\Bbb R$).
We will also prove that  an analogue of the homotopy equivalence
(\ref{equ: KY8}) 
 holds for the space $\Q^{d,m}_n(\K)$   
 (see Theorem \ref{thm: II} for the details). 

\par\vspace{1mm}\par

\paragraph{Basic definitions and notations.}

For connected spaces $X$ and $Y$, let
$\Map(X,Y)$ (resp. $\Map^*(X,Y)$) denote the space
consisting of all continuous maps
(resp. base-point preserving continuous maps) from $X$ to $Y$
with the compact-open topology, and
let $\RP^N$ (resp. $\CP^N$) denote the $N$-dimensional
real projective (resp. complex projective) space.
\par
Note that the based loop space 
$\Map^*(S^1,\RP^N)=\Omega \RP^N$
has two path-components $\Omega_{\epsilon}\RP^N$
for $\epsilon \in \{0,1\}$ when
$N\geq 2$.
The space $\Omega_0\RP^N$ is
the path-component of null homotopic maps and
$\Omega_1\RP^N$ is the path-component which contains
the natural inclusion of the bottom cell $S^1$ in
$\RP^N$.

\par
From now on, let $\K =\R$ or $\C$, 
let $d,m,n\geq 1$ be  positive integers 
such that $(m,n)\not= (1,1)$, and
we always assume that $z$ is a variable.
Let $\P^d(\K)$ denote the space of all $\K$-coefficients
monic polynomials 
$f(z)=z^d+a_1z^{d-1}+\cdots +a_d\in \K [z]$ of  degree $d$.

\begin{definition}\label{def: 1.3}
(i)
Let
$\Q^{d,m}_{n}(\K)$ denote the space consisting of
$m$-tuples $(f_1(z),\cdots ,f_m(z))\in \P^d(\K)^m$
of $\K$-coefficients monic polynomials of the same degree $d$ 
such that the
polynomials $f_1(z),\cdots ,f_m(z)$ have no common {\it real}
root of multiplicity $\geq n$
(but they may have a common {\it complex} root of any multiplicity).
\par
(ii)
Let $(f_1(z),\cdots ,f_m(z))\in \P^d(\K)^m$ be an
$m$-tuple of monic polynomials of the same degree $d$.
Then it is easy to see that 
$(f_1(z),\cdots ,f_m(z))\in \Q^{d,m}_{n}(\K)$ 
iff
the derivative polynomials
$\{f_j^{(k)}(z):
1\leq j\leq m,\ 0\leq k<n\}$
have no common real root.
Thus, by identifying $S^1=\R\cup\infty$,
one can define
 {\it the jet map}
\begin{equation}\label{equ: jet map}
j^{d,m}_{n,\K}:
\Q^{d,m}_{n}(\K)\to \Omega_{[d]_2} \RP^{d(\K)mn-1}
\simeq \Omega S^{d(\K)mn-1}
\quad
\mbox{by}
\end{equation}
\begin{equation}\label{eq: jet}
j^{d,m}_{n,\K}(f_1(z),\cdots ,f_m(z))(\alpha)
=
\begin{cases}
[\textit{\textbf{f}}_1(\alpha):\cdots :\textit{\textbf{f}}_m(\alpha)]
& \mbox{ if }\alpha \in\R
\\
[1:1:\cdots :1]
& \mbox{ if }\alpha =\infty
\end{cases}
\end{equation} 
for $(f_1(z),\cdots ,f_m(z))\in \Q^{d,m}_{n}(\K)$,
where we identify $\C =\R^2$ in 
(\ref{eq: jet}) if $\K=\C$, and
$\textit{\textbf{f}}_k(z)$ $(k=1,\cdots ,m)$
is the $n$-tuple of monic polynomials of the same degree $d$ defined by
\begin{equation}\label{equ: bff}
\textit{\textbf{f}}_k(z)=(f_k(z),f_k(z)+f^{\p}_k(z),f_k(z)+f^{\p\p}_k(z),
\cdots ,f_k(z)+f^{(n-1)}_k(z)).
\end{equation}
\end{definition}

\begin{remark}
Let
$f=(f_1(z),f_2(z),\cdots ,f_m(z))\in \Q^{d,m}_n(\K)$ be any element.
Then it is easy to see that
$\overline{f}=((z^2+1)f_1(z),(z^2+1)f_2(z),\cdots ,(z^2+1)f_m(z))\in \Q^{d+2,m}_n(\K).$
Since the two maps $j^{d,m}_{n,\K}(f)$ and $j^{d+2,m}_{n,\K}(\overline{f})$ 
are homotopic,
the image of $j^{d,m}_{n,\K}$ and that of
$j^{d+2,m}_{n,\K}$ are contained in the same path-component of 
$\Omega \RP^{d(\K)mn-1}$.
We can check directly that $j^{d,m}_{n,\K}(\Q^{d,m}_n(\K))
\subset \Omega_{[d]_2}\RP^{d(\K)mn-1}$ for $d\in \{1,2\}$.
Hence, the image $j^{d,m}_{n,\K}(\Q^{d,m}_n(\K))$
is contained in the component $\Omega_{[d]_2}\RP^{d(\K)mn-1}$ for any
$d\geq 1$.
\qed
\end{remark}

\begin{definition}

Note that
$\P^d_n(\K)=\Q^{d,1}_{n}(\K)$, and that
the map $j^{d,m}_{n,\K}$ coincides with the map
$j^{d,1}_{n,\K}$ given in
(\ref{eq: jet m=1}) for $m=1$.%
\footnote{%
Note that the space $\Q^{d,m}_n(\K)$ is also 
denoted by $\Q^d_{(m)}(\K)$ for $n=1$
in \cite{KY1}.
}
Similarly, 
one can define a natural map
\begin{equation}\label{eq: jet embedding}
i^{d,m}_{n,\K}:\Q^{d,m}_{n}(\K)\to \Q^{d,mn}_{1}(\K)
\qquad
\mbox{by}
\end{equation}
\begin{equation}
i^{d,m}_{n,\K}(f_1(z),\cdots ,f_m(z))=
\big(\textit{\textbf{f}}_1(z),\cdots ,\textit{\textbf{f}}_m(z)\big).
\end{equation}
\par
It is well-known that there is a homotopy equivalence (\cite{Ja}) 
\begin{equation}
\Omega S^{N+1}\simeq
S^N\cup e^{2N}\cup e^{3N}\cup \cdots \cup e^{kN}\cup
e^{(k+1)N}\cup \cdots .
\end{equation}
We will denote the $kN$-skeleton of $\Omega S^{N+1}$  by
$J_k(\Omega S^{N+1})$, i.e.
\begin{equation}
J_k(\Omega S^{N+1})\simeq
S^N\cup e^{2N}\cup e^{3N}\cup \cdots \cup e^{(k-1)N}\cup
e^{kN}.
\end{equation}
This space is usually called the
{\it $k$-stage James filtration} of $\Omega S^{N+1}$.
\qed
%
\end{definition}

\paragraph{Related known results. }
Let $D(d;m,n,\K)$ denote the positive integer defined by
\begin{align}\label{equ: number D}
D(d;m,n,\K)
&=(d(\K)mn-2)(\Big\lfloor \frac{d}{n}\Big\rfloor +1)-1
\\
&=
\begin{cases}
(2mn-2)(\lfloor \frac{d}{n}\rfloor +1)-1 &\mbox{if }\K=\C ,
\\
(mn-2)(\lfloor \frac{d}{n}\rfloor +1)-1 &\mbox{if }\K=\R .
\end{cases}
\nonumber
\end{align}
Recall the following known results for the case $m=1$ or $n=1$.

\begin{theorem}
[\cite{KY1}, \cite{Mo1}, \cite{Va}, \cite{Y5}]
\label{thm: KY1-I}
\par
$\I$
If $d(\K)m\geq 4$ and $n=1$,
the jet map 
$$
j^{d,m}_{1,\K}:\Q^{d,m}_{1}(\K)\to \Omega_{[d]_2}\RP^{d(\K)m-1}
\simeq \Omega S^{d(\K)m-1}
$$ 
is a
homotopy equivalence through dimension
$D(d;m,1,\K)$.
\par
$\II$
If $d(\K)n\geq 4$ and $m=1$,
the jet map 
$$
j^{d,1}_{n,\K}:\Q^{d,1}_n(\K)=\P^d_n(\K)\to \Omega_{[d]_2}\RP^{d(\K)n-1}
\simeq \Omega S^{d(\K)n-1}
$$ 
is a
homotopy equivalence through dimension
$D(d;1,n,\K)$.
\par
$\III$
If 
 $d(\K)n\geq 4$, 
there are  homotopy equivalences
$$
\Q^{d,1}_n(\K)=\P^d_n(\K) \simeq  
J_{\lfloor\frac{d}{n}\rfloor}(\Omega S^{d(\K)n-1})
\ \mbox{ and }\ 
\Q^{d,n}_1(\K)\simeq
J_d(\Omega S^{d(\K)n-1}).
$$
Thus, there is a homotopy equivalence
$
\Q^{d,1}_n(\K)=\P^d_n(\K) \simeq \Q^{\lfloor \frac{d}{n}\rfloor,n}_{1}(\K)
$
if $d(\K)n\geq 4$.
\par
$\IV$
In particular,
if $(\K,m)=(\R,3)$ and $d\geq 1$ is an odd integer,
there is a homotopy equivalence
$\Q^{d,3}_1(\R)\simeq J_d(\Omega S^2)$.
\qed
\end{theorem}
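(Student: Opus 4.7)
The plan is to attack all four parts uniformly via the Vassiliev-style comparison of simplicial resolutions, with parts (iii) and (iv) as refinements of the through-dimension statements in (i) and (ii).

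First, for parts (i) and (ii), I would work with the discriminant
$$
\Sigma^{d,m}_n(\K) = \P^d(\K)^m \setminus \Q^{d,m}_n(\K),
$$
consisting of $m$-tuples with a common real root of multiplicity $\geq n$. Constructing a non-degenerate simplicial resolution of $\Sigma^{d,m}_n(\K)$, filtered by the number of distinct bad common real roots, and applying Alexander duality in the ambient affine space $\P^d(\K)^m \cong \K^{dm}$, yields the Vassiliev spectral sequence converging to $H^*(\Q^{d,m}_n(\K);\Z)$. Each stratum is identifiable as a fiber bundle whose base is an open configuration space of points in $\R$ and whose fiber encodes the remaining polynomial freedom.

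Next, I would run a parallel construction on the mapping space side. The jet map sends $(f_1,\dots,f_m)$ to the basepoint of $\RP^{d(\K)mn-1}$ at $\alpha\in\R$ precisely when all derivatives $\{f_j^{(k)}(\alpha): 1\leq j\leq m,\ 0\leq k<n\}$ vanish, which is exactly the local condition defining the discriminant. Building the analogous resolution for $\Omega_{[d]_2}\RP^{d(\K)mn-1} \simeq \Omega S^{d(\K)mn-1}$ yields a second spectral sequence, which is essentially the James-filtration spectral sequence of $\Omega S^{d(\K)mn-1}$. The jet map induces a morphism between the two, and I would verify it is an $E^1$-isomorphism in the range dictated by $D(d;m,n,\K)$: the $k$-th stratum contributes cells in degree roughly $k(d(\K)mn-2)$, while the polynomial resolution has at most $\lfloor d/n\rfloor+1$ (respectively $d+1$) nonempty filtration stages. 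A simple-connectedness argument, valid when $d(\K)mn\geq 4$, upgrades the resulting homology equivalence to a homotopy equivalence in the stated dimension range.

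For part (iii), the key observation is that the polynomial-side resolution has only finitely many nonzero filtration stages, so finite-dimensionality forces the jet map to factor, homotopically, through the finite James stage $J_{\lfloor d/n\rfloor}(\Omega S^{d(\K)n-1})$ (respectively $J_d(\Omega S^{d(\K)m-1})$); a cell-by-cell matching against this model upgrades the through-dimension equivalence to an honest homotopy equivalence onto the full James stage. The identification $\Q^{d,1}_n(\K)\simeq \Q^{\lfloor d/n\rfloor,n}_1(\K)$ then follows because both sides match the same stage. Part (iv) is a direct specialization: when $(\K,m)=(\R,3)$ and $d$ is odd, $d(\K)m-1=2$ and the relevant path component of $\Omega\RP^2$ is $\Omega_1\RP^2$, so (iii) with $n=1$ applies. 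The principal obstacle is the cell-by-cell matching in part (iii): promoting the through-dimension equivalence to an honest homotopy equivalence onto the finite James filtration requires showing the polynomial resolution produces no extra cells beyond the $\lfloor d/n\rfloor$-th James stage and that the top-dimensional attachments are correct, which demands careful control of the highest-filtration strata of the resolution.
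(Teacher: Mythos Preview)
First, note that this theorem is stated in the paper as a known result from the cited references and carries a \qed\ immediately after the statement; the paper gives no separate proof of it. What the paper does prove is the generalization (Theorem~\ref{thm: I} and Theorem~\ref{thm: II}), which specializes to the statement here. So the relevant comparison is between your outline and the paper's argument for those generalizations.

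For parts (i) and (ii) your route and the paper's diverge. You propose to build a parallel resolution on the loop-space side and show that the jet map induces a morphism of spectral sequences that is an $E^1$-isomorphism in a range. The paper does \emph{not} do this. Instead it computes $H_*(\Q^{d,m}_n(\K);\Z)$ directly from the collapsed Vassiliev spectral sequence (Corollary~\ref{crl: homology}), proves homological stability of the stabilization maps $s^{d,m}_{n,\K}$ (Theorem~\ref{thm: stab1}), and then identifies the stable limit $\Q^{\infty,m}_n(\K)$ with $\Omega S^{d(\K)mn-1}$ via a horizontal scanning argument (Section~\ref{section: scanning maps}, Theorem~\ref{thm: natural map}). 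Combining these gives the through-dimension homology equivalence for the jet map, upgraded to homotopy via simple connectedness (Lemma~\ref{lmm: 1-connected}). Your direct comparison is closer in spirit to Vassiliev's original arguments, but the step you gloss over is the hard one: making the jet map a filtration-preserving map from the simplicial resolution of the discriminant to something whose associated spectral sequence is ``the James-filtration spectral sequence'' is not automatic, since the Vassiliev filtration counts bad real roots while the James filtration counts word length. The paper's stabilization-plus-scanning route avoids ever having to match these filtrations.

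For parts (iii) and (iv) your plan is essentially the paper's: once the homology of $\Q^{d,m}_n(\K)$ is computed and seen to vanish above the top James cell, cellular approximation factors the jet map through the finite James stage, and the resulting map is a homology (hence homotopy) equivalence between simply connected spaces. Your identification of the ``principal obstacle'' there is accurate and is exactly what Corollary~\ref{crl: homology} handles.
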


Note that
the conjugation on $\C$ naturally induces a $\Z/2$-action on
the space $\Q^{d,m}_{n}(\C)$.
From now on,
we regard $\RP^N$ as the $\Z/2$-space with trivial
$\Z/2$-action, and recall
the following result given in \cite{KY1}.

\begin{theorem}[\cite{KY1}]\label{crl: KY1-I}
$\I$
If $m\geq 4$, then
the jet map 
$$
j^{d,m}_{1,\C}:\Q_{1}^{d,m}(\C)\to \Omega_{[d]_2}\RP^{2m-1}
\simeq \Omega S^{2m-1}
$$ 
is a $\Z/2$-equivariant 
homotopy equivalence through dimension
$D(d;m,1,\R)$.
\par
$\II$
If $n\geq 4$, then
the jet map 
$$
j^{d,1}_{n,\C}:\Q^{d,1}_n(\C)=\P^d_n(\C)\to \Omega_{[d]_2}\RP^{2n-1}
\simeq \Omega S^{2n-1}
$$ 
is a $\Z/2$-equivariant 
homotopy equivalence through dimension
$D(d;1,n,\R)$.
\qed
\end{theorem}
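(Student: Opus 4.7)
The strategy is to reduce both parts to applications of Theorem~\ref{thm: KY1-I} via the definition of a $\Z/2$-equivariant homotopy equivalence through dimension $N$ recalled in the preceding Remark: for a map $f$ between $\Z/2$-spaces it suffices that the restriction $f^H:X^H\to Y^H$ be a homotopy equivalence through dimension $N$ for each of $H=\{e\}$ and $H=\Z/2$. Thus the proof of (i) splits into a non-equivariant equivalence and a fixed-point equivalence, and part (ii) proceeds identically.

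For part (i), one first checks that $j^{d,m}_{1,\C}$ is $\Z/2$-equivariant with respect to complex conjugation of coefficients on $\Q^{d,m}_{1}(\C)$ and the involution on $\Omega_{[d]_2}\RP^{2m-1}$ inherited from the identification $\R^{2m}=\C^m$; under this convention the fixed locus of $\RP^{2m-1}$ is the real linear subspace $\RP^{m-1}$, whence $(\Omega_{[d]_2}\RP^{2m-1})^{\Z/2}=\Omega_{[d]_2}\RP^{m-1}$. The non-equivariant part then follows from Theorem~\ref{thm: KY1-I}(i) with $\K=\C$, since its bound $D(d;m,1,\C)=(2m-2)(d+1)-1$ dominates $D(d;m,1,\R)=(m-2)(d+1)-1$ for $m\geq 2$. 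For the fixed-point restriction one identifies $(\Q^{d,m}_{1}(\C))^{\Z/2}=\Q^{d,m}_{1}(\R)$, observes that the restricted map coincides with $j^{d,m}_{1,\R}$, and applies Theorem~\ref{thm: KY1-I}(i) with $\K=\R$, whose hypothesis $d(\R)m\geq 4$ becomes the assumption $m\geq 4$ and yields equivalence through exactly $D(d;m,1,\R)$.

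The proof of part (ii) follows the same two-step reduction with Theorem~\ref{thm: KY1-I}(ii) in place of (i): the whole-space equivalence is inherited from the complex case via the comparison $D(d;1,n,\C)\geq D(d;1,n,\R)$, while the fixed-point restriction reduces to $j^{d,1}_{n,\R}:\P^d_n(\R)\to\Omega_{[d]_2}\RP^{n-1}$, for which Theorem~\ref{thm: KY1-I}(ii) with $\K=\R$ (under the hypothesis $n\geq 4$) delivers the required equivalence through dimension $D(d;1,n,\R)$.

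The main obstacle is not the homotopy estimate itself, which is inherited directly from Theorem~\ref{thm: KY1-I}, but the careful verification of the $\Z/2$-equivariance of $j^{d,m}_{1,\C}$ and $j^{d,1}_{n,\C}$ and the identification of the fixed-point loci on the target loop space, so that the restricted jet map genuinely coincides with the real jet map rather than merely mapping into a larger projective space. Once this bookkeeping is settled, the equivariant enhancement simply packages the parallel complex and real statements of Theorem~\ref{thm: KY1-I}, with the slower real bound providing the binding constraint in each part.
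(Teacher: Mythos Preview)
Your approach is correct and is, in fact, precisely the argument the paper itself uses later to deduce Corollary~\ref{cor: I-1} from Theorem~\ref{thm: I}: one checks $\Z/2$-equivariance of the complex jet map, identifies its fixed-point restriction with the real jet map, and then invokes the non-equivariant statements for $\K=\C$ and $\K=\R$ separately, the real bound $D(d;m,n,\R)$ being the binding one. The paper does not supply an independent proof of Theorem~\ref{crl: KY1-I}; it is quoted as a known result from \cite{KY1} (note the terminal \qed), so there is nothing further to compare against.

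One point of care worth flagging: the paper states just before Theorem~\ref{crl: KY1-I} that $\RP^N$ is to be regarded as a $\Z/2$-space with \emph{trivial} action, but this cannot be literally correct---with the trivial action the complex jet map is not equivariant (conjugating the polynomials moves the image point in $\RP^{2m-1}$), and the identity $(j^{d,m}_{n,\C})^{\Z/2}=j^{d,m}_{n,\R}$ asserted before Corollary~\ref{cor: I-1} would fail on the target side. Your convention, that the involution on $\RP^{2m-1}$ is induced by complex conjugation on $\C^m\cong\R^{2m}$ with fixed locus containing the real $\RP^{m-1}$ through the basepoint, is the one actually needed, and your identification of the restricted map with $j^{d,m}_{1,\R}$ (resp.\ $j^{d,1}_{n,\R}$) is the substantive step. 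With that in hand, the appeal to Theorem~\ref{thm: KY1-I}(i),(ii) goes through exactly as you describe.
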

\paragraph{The main results. }
The main purpose of this paper is to determine the homotopy type of 
the space $\Q^{d,m}_{n}(\K)$ explicitly and 
generalize the above two theorems (Theorems
\ref{thm: KY1-I}
and \ref{crl: KY1-I}) for the case $m\geq 2$ and the case $n\geq 2$.
More precisely, the main results are below.

\begin{theorem}\label{thm: I}
If $d(\K)mn\geq 4$,
the jet map
$$
j^{d,m}_{n,\K}:\Q^{d,m}_{n}(\K) \to \Omega_{[d]_2}\RP^{d(\K)mn-1}\simeq \Omega S^{d(\K)mn-1}
$$
is a homotopy equivalence through dimension
$D(d;m,n,\K)$.
\end{theorem}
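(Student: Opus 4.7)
The plan is to adapt the Vassiliev spectral sequence technique, as used in \cite{Va} and \cite{KY1}, to the space $\Q^{d,m}_n(\K)$, comparing a simplicial resolution of its real discriminant with the analogous filtration on $\Omega_{[d]_2}\RP^{d(\K)mn-1}\simeq\Omega S^{d(\K)mn-1}$ via the jet map $j^{d,m}_{n,\K}$. Set $\Sigma := \P^d(\K)^m\setminus\Q^{d,m}_n(\K)$; since $\P^d(\K)^m\cong\K^{dm}$ is an affine space of real dimension $d(\K)dm$, Alexander duality reduces the computation of the cohomology of $\Q^{d,m}_n(\K)$ to the Borel--Moore homology of $\Sigma$.

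First I would construct a non-degenerate simplicial resolution $\pi:\mathcal{X}\to\Sigma$ in the Vassiliev style: a tuple $(f_1,\ldots,f_m)\in\Sigma$ whose distinct common real roots of multiplicity $\geq n$ are $\alpha_1<\cdots<\alpha_k$ is lifted to the closed $(k-1)$-simplex on $\{\alpha_1,\ldots,\alpha_k\}$. The associated filtration $F_1\subset F_2\subset\cdots\subset\mathcal{X}$ terminates at $k=\lfloor d/n\rfloor$ since $nk\leq d$ is forced by degree, and each stratum $F_k\setminus F_{k-1}$ fibers over the unordered configuration space $F(\R,k)/\mathfrak{S}_k$ with fiber an open $(k-1)$-simplex times an affine subspace of real codimension $d(\K)mnk$ inside $\P^d(\K)^m$. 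This yields a Vassiliev spectral sequence $\{E^r_{p,q}\}$ converging (with appropriate shift) to $\tilde H^*(\Q^{d,m}_n(\K))$.

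The second step is to set up the parallel spectral sequence on the target using the James filtration $J_k(\Omega S^{d(\K)mn-1})$, whose successive quotients are generated in dimension $k(d(\K)mn-2)$ and are likewise governed by symmetric configurations of $k$ points on $S^1=\R\cup\{\infty\}$, twisted by a sign local system depending on $[d]_2$. The jet map is compatible with both filtrations because $\textit{\textbf{f}}_j(\alpha)$ degenerates to the basepoint $[1{:}\cdots{:}1]$ exactly when $(f_1,\ldots,f_m)$ has a common $n$-fold real root at $\alpha$. A term-by-term comparison under the cutoff $k\leq\lfloor d/n\rfloor$ produces an isomorphism of $E^1$-pages through total degree $D(d;m,n,\K)=(d(\K)mn-2)(\lfloor d/n\rfloor+1)-1$, hence a homology equivalence in this range. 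Equivalently, this step can be phrased as producing a compatible homotopy equivalence $\Q^{d,m}_n(\K)\simeq J_{\lfloor d/n\rfloor}(\Omega S^{d(\K)mn-1})$, directly generalising Theorem \ref{thm: KY1-I} \III.

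Finally, the homology equivalence is promoted to a homotopy equivalence by the Hurewicz--Whitehead theorem: the hypothesis $d(\K)mn\geq 4$ makes $\Omega S^{d(\K)mn-1}$ at least $2$-connected, while $\Q^{d,m}_n(\K)$ is simply connected because $\Sigma$ has real codimension at least $d(\K)mn\geq 4$ in the affine space $\P^d(\K)^m$. The main obstacle I expect is the clean identification of the $E^1$-terms under the jet map: this demands careful bookkeeping of the twisted sign local systems arising both from permutations of root configurations and from the component label $[d]_2$. Compared with the cases $m=1$ or $n=1$ handled in \cite{KY1}, the fiber codimension $d(\K)mn$ is now genuinely multiplicative in $m$ and $n$, so matching cells with the James filtration and verifying that intermediate differentials vanish in the relevant range requires noticeably more care than in the previous one-parameter situation.
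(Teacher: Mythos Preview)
Your overall ingredients---the Vassiliev resolution of the discriminant $\Sigma$, Alexander duality, the computation of the strata as affine bundles over $C_k(\R)$, and the final Hurewicz--Whitehead upgrade---match the paper. But the central step of your argument, and the paper's, diverge in an essential way.

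The paper does \emph{not} compare the Vassiliev spectral sequence for $\Q^{d,m}_n(\K)$ directly with any filtration on the target $\Omega S^{d(\K)mn-1}$. Instead it proceeds in two stages: first (Theorem~\ref{thm: stab1}) it compares the spectral sequences for $\Q^{d,m}_n(\K)$ and $\Q^{d+1,m}_n(\K)$ to show that the stabilization map $s^{d,m}_{n,\K}$ is a homology equivalence through dimension $D(d;m,n,\K)$; second (Theorem~\ref{thm: natural map}) it uses horizontal scanning maps \`a la Segal to prove that the limit $j^{\infty,m}_{n,\K}:\Q^{\infty,m}_n(\K)\to\Omega S^{d(\K)mn-1}$ is a homology equivalence. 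Corollary~\ref{crl: III} then combines these, and simple connectivity finishes the job.

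The gap in your proposal is the second step. You assert that the jet map is ``compatible with both filtrations'' (Vassiliev on the source, James on the target), but these filtrations live in very different worlds: the Vassiliev filtration is on the one-point compactification of the discriminant and reaches $H_*(\Q^{d,m}_n(\K))$ only through Alexander duality, while the James filtration is a cellular filtration on $\Omega S^{N}$ itself. There is no evident way in which $j^{d,m}_{n,\K}$ induces a filtration-preserving map between these, and hence no induced map of spectral sequences. Knowing that the $E^1$-pages are abstractly isomorphic (both have a single $\Z$ on each diagonal $s-k=(d(\K)mn-2)k$) does not tell you that $j^{d,m}_{n,\K}$ realizes that isomorphism on homology; this is precisely the content supplied by the scanning argument. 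The identification $\Q^{d,m}_n(\K)\simeq J_{\lfloor d/n\rfloor}(\Omega S^{d(\K)mn-1})$ you mention is in fact Theorem~\ref{thm: II}, which the paper deduces \emph{from} Theorem~\ref{thm: I} via cellular approximation, not the other way around. (Incidentally, the sign local systems you worry about are trivial here, since $C_k(\R)\cong\R^k$ is contractible; this part is easier than you anticipate, not harder.)
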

Note that the conjugation on $\C$ naturally induces the $\Z/2$-action on
the space $\Q^{d,m}_{n}(\C)$. 
Since the map $j^{d,m}_{n,\C}$ is a
$\Z/2$-equivariant map and $(j^{d,m}_{n,\C})^{\Z/2}=j^{d,m}_{n,\R}$,
we also obtain the following result.

\begin{corollary}\label{cor: I-1}
If $mn\geq 4$, the jet map
$$
j^{d,m}_{n,\C}:\Q^{d,m}_{n}(\C) \to \Omega_{[d]_2}\RP^{2mn-1}\simeq 
\Omega S^{2mn-1}
$$
is a $\Z/2$-equivariant homotopy equivalence through dimension
$D(d;m,n,\R)$.
\qed
\end{corollary}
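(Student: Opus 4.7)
The plan is to deduce the corollary as a direct formal consequence of Theorem \ref{thm: I} by unpacking the definition of $\Z/2$-equivariant equivalence from the Remark after Theorem \ref{thm: 1.1}. Since $\Z/2$ has exactly two subgroups, the task reduces to showing that each of the restrictions $(j^{d,m}_{n,\C})^{\{e\}}=j^{d,m}_{n,\C}$ and $(j^{d,m}_{n,\C})^{\Z/2}$ is a non-equivariant homotopy equivalence through dimension $D(d;m,n,\R)$. Equivariance itself is a direct check in formula (\ref{eq: jet}): complex conjugation $\sigma$ acts coefficient-wise on the source and coordinate-wise (via $\C=\R^2$) on $\RP^{2mn-1}$, and one verifies $j^{d,m}_{n,\C}(\sigma\cdot f)(\alpha)=\sigma\cdot j^{d,m}_{n,\C}(f)(\alpha)$ for every real $\alpha$.

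For the free part $H=\{e\}$, the restriction is $j^{d,m}_{n,\C}$ itself. Since $mn\geq 4$ gives $d(\C)mn=2mn\geq 4$, Theorem \ref{thm: I} applied with $\K=\C$ yields a homotopy equivalence through dimension $D(d;m,n,\C)$, which from (\ref{equ: number D}) satisfies
$$
D(d;m,n,\C)-D(d;m,n,\R)=mn\Bigl(\Big\lfloor\tfrac{d}{n}\Big\rfloor+1\Bigr)\geq 0,
$$
so more than enough. For the fixed part $H=\Z/2$, I would identify $(\Q^{d,m}_n(\C))^{\Z/2}=\Q^{d,m}_n(\R)$ (a tuple of complex polynomials is $\sigma$-fixed iff every coefficient is real) and $(\Omega_{[d]_2}\RP^{2mn-1})^{\Z/2}=\Omega_{[d]_2}\RP^{mn-1}$ (fixed points under coordinate conjugation correspond to points with real projective coordinates, and passing to loops commutes with taking fixed points since the action on $S^1$ is trivial). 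A direct comparison of the formulas (\ref{eq: jet}) for $\K=\C$ restricted to real-coefficient inputs with the same formula for $\K=\R$ then shows that $(j^{d,m}_{n,\C})^{\Z/2}$ coincides with $j^{d,m}_{n,\R}$. Since $mn\geq 4$ also forces $d(\R)mn=mn\geq 4$, Theorem \ref{thm: I} applied with $\K=\R$ produces exactly the required equivalence through dimension $D(d;m,n,\R)$.

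The argument is essentially formal modulo Theorem \ref{thm: I}: the only nontrivial bookkeeping is matching the fixed subspaces of source and target and comparing the jet-map formulas in the real and complex cases. There is no genuine new obstacle in this corollary; all of the substantive analysis, in particular the Vassiliev-spectral-sequence computation underlying Theorem \ref{thm: I}, is already packaged into the main theorem, so the corollary is obtained by a clean combinatorial comparison of the two dimensional bounds $D(d;m,n,\C)$ and $D(d;m,n,\R)$.
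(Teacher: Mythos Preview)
Your proof is correct and follows exactly the paper's approach: the paper's entire argument is the sentence preceding the corollary, namely that $j^{d,m}_{n,\C}$ is $\Z/2$-equivariant with $(j^{d,m}_{n,\C})^{\Z/2}=j^{d,m}_{n,\R}$, after which Theorem \ref{thm: I} applied with $\K=\C$ and $\K=\R$ handles the two subgroups of $\Z/2$. You have simply made explicit the fixed-point identifications and the inequality $D(d;m,n,\R)\leq D(d;m,n,\C)$ that the paper leaves tacit.
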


\begin{corollary}\label{cor: I-2}
If $d(\K)mn\geq 4$, the jet embedding
\begin{equation*}
i^{d,m}_{n,\K}:\Q^{d,m}_{n}(\K)
\to
\Q^{d,mn}_1(\K)
\end{equation*}
is a homotopy equivalence through dimension $D(d;m,n,\K)$.
\end{corollary}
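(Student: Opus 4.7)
The plan is to deduce this corollary directly from Theorem \ref{thm: I} together with Theorem \ref{thm: KY1-I}(I) (applied with the triple $(d,mn,1)$ in place of $(d,m,n)$), using a simple two-out-of-three argument on homotopy groups.

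First I would verify, by unwinding the definitions in (\ref{equ: jet map})--(\ref{equ: bff}) and (\ref{eq: jet embedding}), that the diagram
\begin{equation*}
\xymatrix{
\Q^{d,m}_{n}(\K) \ar[rr]^{i^{d,m}_{n,\K}} \ar[dr]_{j^{d,m}_{n,\K}} & & \Q^{d,mn}_1(\K) \ar[dl]^{j^{d,mn}_{1,\K}} \\
& \Omega_{[d]_2}\RP^{d(\K)mn-1} &
}
\end{equation*}
commutes. Indeed, for $(f_1,\dots,f_m)\in\Q^{d,m}_n(\K)$ and $\alpha\in\R$, the image under $j^{d,mn}_{1,\K}\circ i^{d,m}_{n,\K}$ is $[\textit{\textbf{f}}_1(\alpha):\cdots:\textit{\textbf{f}}_m(\alpha)]$, which is exactly $j^{d,m}_{n,\K}(f_1,\dots,f_m)(\alpha)$ by (\ref{eq: jet}).

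Next I would compare the two connectivity bounds. By (\ref{equ: number D}),
\begin{equation*}
D(d;mn,1,\K) = (d(\K)mn-2)(d+1)-1 \geq (d(\K)mn-2)\bigl(\lfloor d/n\rfloor+1\bigr)-1 = D(d;m,n,\K),
\end{equation*}
so any homotopy equivalence through dimension $D(d;mn,1,\K)$ is in particular one through dimension $D(d;m,n,\K)$.

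Finally, the hypothesis $d(\K)mn\geq 4$ ensures both that Theorem \ref{thm: I} applies to give that $j^{d,m}_{n,\K}$ is a homotopy equivalence through dimension $D(d;m,n,\K)$, and that Theorem \ref{thm: KY1-I}(I) (with $m$ replaced by $mn$) applies to give that $j^{d,mn}_{1,\K}$ is a homotopy equivalence through dimension $D(d;mn,1,\K)$. Passing to induced maps on $\pi_k$ for $k\leq D(d;m,n,\K)$, the diagram shows that $(j^{d,mn}_{1,\K})_*\circ (i^{d,m}_{n,\K})_*$ is an isomorphism and $(j^{d,mn}_{1,\K})_*$ is an isomorphism, so $(i^{d,m}_{n,\K})_*$ is an isomorphism in the same range. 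There is no real obstacle here; the corollary is essentially a formal consequence of Theorem \ref{thm: I} and the previously known $n=1$ case, once the commutativity of the triangle is checked and the inequality $D(d;m,n,\K)\leq D(d;mn,1,\K)$ is noted.
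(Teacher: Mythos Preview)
Your proof is correct and follows essentially the same route as the paper: both use the commutative triangle with the two jet maps $j^{d,m}_{n,\K}$ and $j^{d,mn}_{1,\K}$ to $\Omega S^{d(\K)mn-1}$, compare the bounds $D(d;m,n,\K)\leq D(d;mn,1,\K)$, and conclude by a two-out-of-three argument on homotopy groups. The only cosmetic difference is that the paper invokes Theorem~\ref{thm: I} for both jet maps (the case $n=1$ of Theorem~\ref{thm: I} recovers Theorem~\ref{thm: KY1-I}(I)), whereas you cite Theorem~\ref{thm: KY1-I}(I) directly for $j^{d,mn}_{1,\K}$; the conclusions are identical.
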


Finally
we have the following result.

\begin{theorem}\label{thm: II}
If $d(\K)mn\geq 4$,
there is a homotopy equivalence
$$
\Q^{d,m}_{n}(\K) \simeq 
J_{\lfloor \frac{d}{n}\rfloor}(\Omega S^{d(\K)mn-1}).
$$
Hence, in this situation, there are homotopy equivalences
$$
\Q^{d,m}_{n}(\K)\simeq \Q^{d,1}_{mn}(\K)
\simeq
\Q^{\lfloor\frac{d}{n}\rfloor ,mn}_{1}(\K). 
$$
\end{theorem}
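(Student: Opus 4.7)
The plan is to identify $\Q^{d,m}_{n}(\K)$ with a specific James filtration stage of $\Omega S^{d(\K)mn-1}$ by combining Theorem \ref{thm: I} with a dimension bound. Set $N := d(\K)mn - 2$ and $k := \lfloor d/n\rfloor$. The crucial arithmetic identity is $(k+1)N - 1 = D(d;m,n,\K)$, which says that Theorem \ref{thm: I} produces a homotopy equivalence $j^{d,m}_{n,\K} : \Q^{d,m}_{n}(\K) \to \Omega S^{N+1}$ precisely through the connectivity range of the skeleton inclusion $\iota_k : J_{k}(\Omega S^{N+1}) \hookrightarrow \Omega S^{N+1}$, whose next attached cell sits in dimension $(k+1)N = D+1$. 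Thus $j^{d,m}_{n,\K}$ and $\iota_k$ are both $D$-equivalences into the same target $\Omega S^{N+1}$.

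The first and most delicate step is to show that $\Q^{d,m}_{n}(\K)$ has the homotopy type of a CW complex of dimension at most $kN = \lfloor d/n\rfloor\,(d(\K)mn - 2)$. This is the main obstacle: the real dimension of $\Q^{d,m}_{n}(\K)$ as a manifold is $d m d(\K)$, which is generically larger than $kN$, so the bound cannot be read off from dimension counting alone. It must instead be extracted from the finer structure of $\Q^{d,m}_{n}(\K)$ as the complement of an algebraic discriminant in $\P^d(\K)^m$, exploiting the Vassiliev-type truncated simplicial resolution that underlies the proof of Theorem \ref{thm: I}: only the filtration strata indexed by configurations with at most $k$ \lq\lq forbidden\rq\rq\ constraints contribute non-trivially, forcing the (co)homological dimension to be bounded by $kN$.

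Once this bound is in hand, the remainder is standard obstruction-theoretic machinery. By cellular approximation, the jet map $j^{d,m}_{n,\K}$ lifts, up to homotopy, to a map $\tilde j : \Q^{d,m}_{n}(\K) \to J_{k}(\Omega S^{N+1})$ with $\iota_k \circ \tilde j \simeq j^{d,m}_{n,\K}$. Since both $j^{d,m}_{n,\K}$ and $\iota_k$ are $D$-equivalences, the lift $\tilde j$ is also a $D$-equivalence, and in particular induces isomorphisms on $\pi_i$ for all $i \leq kN$. Both the source and target of $\tilde j$ being homotopy equivalent to CW complexes of dimension at most $kN$, Whitehead's theorem for finite-dimensional CW complexes promotes $\tilde j$ to a genuine homotopy equivalence, yielding $\Q^{d,m}_{n}(\K) \simeq J_{\lfloor d/n\rfloor}(\Omega S^{d(\K)mn - 1})$.

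The \emph{Hence} clause is then immediate from the previously established models. Applying Theorem \ref{thm: KY1-I}\III\ with $(d, m) \mapsto (\lfloor d/n\rfloor, mn)$ gives $\Q^{\lfloor d/n\rfloor, mn}_{1}(\K) \simeq J_{\lfloor d/n\rfloor}(\Omega S^{d(\K)mn - 1})$, which coincides with the target of the main equivalence just constructed; the identification with $\Q^{d,1}_{mn}(\K)$ is obtained by the parallel application of the same reference, so that all three spaces are homotopy equivalent to a common James filtration stage.
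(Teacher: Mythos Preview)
Your approach is essentially the paper's: bound the homological dimension of $\Q^{d,m}_{n}(\K)$ via the Vassiliev spectral sequence, lift $j^{d,m}_{n,\K}$ into $J_{\lfloor d/n\rfloor}(\Omega S^{d(\K)mn-1})$ by cellular approximation, and finish with Whitehead. The paper makes your \lq\lq most delicate step\rq\rq\ precise by citing Corollary~\ref{crl: homology} (the explicit homology computation) together with Lemma~\ref{lmm: 1-connected} (simple connectivity), and in the endgame argues homologically rather than via a CW-dimension form of Whitehead: since both the source and target of the lift have vanishing integral homology above degree $\lfloor d/n\rfloor\,(d(\K)mn-2)$, the lift is a homology equivalence between simply connected spaces and hence a homotopy equivalence.
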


This paper is organized as follows.
In \S \ref{section: simplicial resolution} we recall the simplicial
resolutions and in \S \ref{section: spectral sequence}
we construct the Vassiliev spectral sequences induced from the non-degenerate 
simplicial resolutions.
In particular, by using this spectral sequence, we compute
the homology 
$H_*(\Q^{d,m}_{n}(\K),\Z)$ explicitly.
In \S \ref{section: sd}, we recall the stabilization maps
 and prove the key unstable
result (Theorem \ref{thm: stab1})
by using comparison of the Vassiliev spectral sequence.
In \S \ref{section: scanning maps} we prove the stability result 
(Theorem \ref{thm: natural map}) by using the horizontal scanning map.
Finally in \S \ref{section: proofs} we give the proof of the main results
(Theorem \ref{thm: I},
Corollary \ref{cor: I-2}, Theorem \ref{thm: II})
by using two key results (Theorem \ref{thm: stab1}, Theorem \ref{thm: natural map}).

\section{Simplicial resolutions}\label{section: simplicial resolution}

In this section, we give the definitions of and summarize the basic facts about  non-degenerate simplicial resolutions  
(\cite{Va}, \cite{Va2}, (cf.  \cite{Mo2})).
\begin{definition}\label{def: def}
{\rm
(i) For a finite set $\textbf{\textit{v}} =\{v_1,\cdots ,v_l\}\subset \R^N$,
let $\sigma (\textbf{\textit{v}})$ denote the convex hull spanned by 
$\textbf{\textit{v}}.$
Let $h:X\to Y$ be a surjective map such that
$h^{-1}(y)$ is a finite set for any $y\in Y$, and let
$i:X\to \R^N$ be an embedding.
Let  $\mathcal{X}^{\Delta}$  and $h^{\Delta}:{\mathcal{X}}^{\Delta}\to Y$ 
denote the space and the map
defined by
\begin{equation}\label{eq: non-degenerate}
\mathcal{X}^{\Delta}=
\big\{(y,u)\in Y\times \R^N:
u\in \sigma (i(h^{-1}(y)))
\big\}\subset Y\times \R^N,
\ h^{\Delta}(y,u)=y.
\end{equation}
The pair $(\mathcal{X}^{\Delta},h^{\Delta})$ is called
{\it the simplicial resolution of }$(h,i)$.
In particular, $(\mathcal{X}^{\Delta},h^{\Delta})$
is called {\it a non-degenerate simplicial resolution} if for each $y\in Y$
any $k$ points of $i(h^{-1}(y))$ span a $(k-1)$-dimensional simplex of $\R^N$.
\par
(ii)
For each $k\geq 0$, let $\mathcal{X}^{\Delta}_k\subset \mathcal{X}^{\Delta}$ be the subspace
given by 
\begin{equation}\label{eq: simplicial}
\mathcal{X}_k^{\Delta}=\big\{(y,u)\in \mathcal{X}^{\Delta}:
u \in\sigma (\textbf{\textit{v}}),
\textbf{\textit{v}}=\{v_1,\cdots ,v_l\}\subset i(h^{-1}(y)),\ l\leq k\big\}.
\end{equation}
We make identification $X=\mathcal{X}^{\Delta}_1$ by identifying 
 $x\in X$ with 
$(h(x),i(x))\in \mathcal{X}^{\Delta}_1$,
and we note that  there is an increasing filtration
\begin{equation*}\label{equ: filtration}
\emptyset =
\mathcal{X}^{\Delta}_0\subset X=\mathcal{X}^{\Delta}_1\subset \mathcal{X}^{\Delta}_2\subset
\cdots \subset \mathcal{X}^{\Delta}_k\subset \mathcal{X}^{\Delta}_{k+1}\subset
\cdots \subset \bigcup_{k= 0}^{\infty}\mathcal{X}^{\Delta}_k=\mathcal{X}^{\Delta}.
\end{equation*}
}
\end{definition}
Since the map $h^{\Delta}$ is a proper map,
it extends to the map
$h^{\Delta}_+:\mathcal{X}^{\Delta}_+\to Y_+$
between one-point compactifications, where
$X_+$ denotes the one-point compactification of a locally compact space
$X$.

\begin{theorem}[\cite{Va}, \cite{Va2} , \cite{Mo2}
(cf. \cite{KY7})]\label{thm: simp}
Let $h:X\to Y$ be a surjective map such that
$h^{-1}(y)$ is a finite set for any $y\in Y,$ 
$i:X\to \R^N$ an embedding, and let
$(\mathcal{X}^{\Delta},h^{\Delta})$ denote the simplicial resolution of $(h,i)$.
\par
\begin{enumerate}
\item[$\I$]
If $X$ and $Y$ are semi-algebraic spaces and the
two maps $h$, $i$ are semi-algebraic maps, then
$h^{\Delta}_+:\mathcal{X}^{\Delta}_+\stackrel{\simeq}{\rightarrow}Y_+$
is a homology equivalence.
\item[$\II$]
If there is an embedding $j:X\to \R^M$ such that its associated simplicial resolution
$(\tilde{\mathcal{X}}^{\Delta},\tilde{h}^{\Delta})$
is non-degenerate,
the space $\tilde{\mathcal{X}}^{\Delta}$
is uniquely determined up to homeomorphism and
there is a filtration preserving homotopy equivalence
$q^{\Delta}:\tilde{\mathcal{X}}^{\Delta}\stackrel{\simeq}{\rightarrow}{\mathcal{X}}^{\Delta}$ such that $q^{\Delta}\vert X=\mbox{id}_X$.
\item[$\III$]
A non-degenerate simplicial resolution always exists even if the map $h$ is not finite to one.
\end{enumerate}
\end{theorem}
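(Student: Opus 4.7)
The plan is to address the two parts of the theorem in sequence, with most of the work concentrated in part (i).

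For the homology equivalence in part (i), I would first observe that every fiber $(h^{\Delta})^{-1}(y) = \{y\} \times \sigma(i(h^{-1}(y)))$ is a (possibly degenerate) finite-dimensional simplex, hence contractible. Since $h$ has finite fibers and is semi-algebraic, so is $h^{\Delta}$; in particular, after forming one-point compactifications, $h^{\Delta}_+:\mathcal{X}^{\Delta}_+ \to Y_+$ becomes a proper semi-algebraic surjection with contractible fibers over $Y \subset Y_+$. I would then invoke the Vietoris--Begle theorem (or equivalently the Leray spectral sequence applied to the constant sheaf, whose stalks $H_*((h^{\Delta})^{-1}(y))$ are concentrated in degree $0$) to conclude that $h^{\Delta}_+$ is a homology equivalence. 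The semi-algebraic hypotheses are used here to guarantee the local triviality (up to homeomorphism) on pieces of a semi-algebraic stratification of $Y$, which is what makes Vietoris--Begle applicable.

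For the second assertion of (i), I would construct $j : X \to \R^M$ by composing $i$ with a generic map $\psi: \R^N \to \R^M$, where $M$ is chosen large enough. The set of $\psi$ for which some finite subset of $\psi(i(h^{-1}(y)))$ (for some $y \in Y$) fails to be affinely independent is a countable union of proper semi-algebraic loci in the parameter space of $\psi$; a generic $\psi$ therefore avoids them, producing the required non-degenerate embedding $j = \psi \circ i$. Alternatively, one can use an explicit moment-curve-type formula $\psi(x) = (\ell(x), \ell(x)^2, \ldots, \ell(x)^M)$ for a generic linear functional $\ell$, exploiting Vandermonde-type non-degeneracy.

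For part (ii), the uniqueness of $\tilde{\mathcal{X}}^{\Delta}$ follows from a purely combinatorial identification: each fiber of a non-degenerate $\tilde{h}^{\Delta}$ over $y$ is canonically the standard abstract simplex on the finite set $h^{-1}(y)$, independent of the choice of $j$, and these simplices assemble into a fiber-wise homeomorphism between any two non-degenerate resolutions (the semi-algebraic structure of $\tilde{h}^{\Delta}$ upgrades this to a global homeomorphism). To construct $q^{\Delta}: \tilde{\mathcal{X}}^{\Delta} \to \mathcal{X}^{\Delta}$, I would define it fiber-wise as the unique affine extension of the identity on vertices: a point over $y \in Y$ in $\tilde{\mathcal{X}}^{\Delta}$ is a formal convex combination $\sum t_k j(x_k)$ with $x_k \in h^{-1}(y)$, and I would send it to $\sum t_k i(x_k) \in \sigma(i(h^{-1}(y)))$. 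This map is continuous, is the identity on $X = \mathcal{X}^{\Delta}_1 = \tilde{\mathcal{X}}^{\Delta}_1$, preserves the filtration by construction, and each fiber of $q^{\Delta}$ over a point of $\mathcal{X}^{\Delta}$ is a convex polytope obtained by identifying faces of a single standard simplex, hence contractible. A second application of Vietoris--Begle (this time over $\mathcal{X}^{\Delta}$, together with the corresponding statement on each filtration stratum) then shows $q^{\Delta}$ is a homotopy equivalence.

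The main obstacle I anticipate is the technical handling of the one-point compactifications in the Vietoris--Begle step. Contractibility of the fibers is immediate, but passing from this to a homology equivalence of compactifications requires the map to behave well at infinity and enough local triviality to apply the sheaf-theoretic or spectral-sequence version of Vietoris--Begle; this is where the semi-algebraic structure does the essential work, via a semi-algebraic stratification of $Y$ on which $h^{\Delta}$ is trivial up to homeomorphism. The corresponding filtration-preserving refinement required for part (ii) (applying the argument simultaneously on each $\mathcal{X}^{\Delta}_k \setminus \mathcal{X}^{\Delta}_{k-1}$) is a routine but careful bookkeeping exercise.
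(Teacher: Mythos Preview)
The paper does not supply its own proof of this theorem: the statement is attributed to \cite{Va}, \cite{Va2} (cf.\ \cite{KY7}, \cite{Mo2}) and closed with a \qed\ in the statement itself, with only the brief Remark that follows noting that $h^{\Delta}_+$ is in fact a homotopy equivalence but that the weaker homology statement suffices. So there is no in-paper argument to compare your proposal against; the authors simply import the result from Vassiliev's monographs.

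That said, your outline is essentially the standard argument one finds in those sources: contractibility of the fibers of $h^{\Delta}$ plus a Vietoris--Begle/Leray argument for part~(i), a moment-curve or genericity construction for the non-degenerate embedding, and the fiberwise affine collapse for part~(ii). One point worth tightening: in part~(ii) you invoke Vietoris--Begle to conclude that $q^{\Delta}$ is a \emph{homotopy} equivalence, but Vietoris--Begle by itself yields only a homology equivalence. To get the homotopy statement you need either an additional cell-like/hereditary-shape-equivalence argument (which is available in the semi-algebraic setting, since the fibers are convex and the spaces are finite-dimensional ANRs), or you should note---as the paper does---that for the applications only the homology statement is actually needed.
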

\begin{proof}\label{Remark: non-degenerate}
The assertion  (ii) follows from the universality of non-degenerate  resolutions as in \cite[page 287]{Mo2}, and it remains to show (i) and (iii).
Note that  (i)  was 
already  proved in \cite[Lemma 1 (page 90)]{Va}.
Indeed,
by \cite[Theorem (page 43)]{GM}
semi-algebraic sets can be triangulated; therefore,   there exists a cellular decomposition
of the space $Y_+$ such that  over each open cell the projection 
$h^{\Delta}_+$ is a trivializable bundle with simplex as a fiber.
Filter the space $Y_+$ by the skeletons of this decomposition, and the space
$\mathcal{X}^{\Delta}$ by the pre-images of these skeletons under the map
$h^{\Delta}_+$.
Let $\mathcal{E}^r_{p,q}$ denote the spectral sequence 
induced from the this filtration
converging to the homology of
the space $\mathcal{X}^{\Delta}_+$.
By construction of this filtration, we easily see that
$\mathcal{E}^1_{p,q}=0$ for any $q\geq 1$, and that
the complex $\{\mathcal{E}^1_{*,0};d_1\}$ 
is isomorphic to the cellular differential complex of the
cellular decomposition of $Y_+$, the isomorphism being given by  
$(h^{\Delta}_+)_{\#}$.
Thus, $h^{\Delta}_+$ is a homology equivalence and (i) was obtained.
\par
Finally, we prove (iii).
Recall from
 \cite[Chap. III]{Va}  that there exists a sequence of embeddings
$\{\tilde{i}_k:X\to \R^{N_k}\}_{k\geq 1}$ satisfying the following two conditions
for each $k\geq 1$:
\begin{enumerate}
\item[(\ref{eq: simplicial}.i)]
For any $y\in Y$,
any $t$ points of the set $\tilde{i}_k(h^{-1}(y))$ span a $(t-1)$-dimensional affine subspace
of $\R^{N_k}$ if $t\leq 2k$.
\item[(\ref{eq: simplicial}.ii)]
$N_k\leq N_{k+1}$ and if we identify $\R^{N_k}$ with a subspace of
$\R^{N_{k+1}}$, 
then $\tilde{i}_{k+1}=\hat{i}\circ \tilde{i}_k$,
where
$\hat{i}:\R^{N_k}\stackrel{\subset}{\rightarrow} \R^{N_{k+1}}$
denotes the inclusion.
\end{enumerate}
We then let
\begin{equation}\label{2.3}
\mathcal{X}^{\Delta}_k=\big\{(y,u)\in Y\times \R^{N_k}:
u\in\sigma (\textbf{\textit{v}}),
\textbf{\textit{v}}
=\{v_1,\cdots ,v_l\}\subset \tilde{i}_k(h^{-1}(y)),l\leq k\big\}.
\end{equation}
Identifying naturally  $\mathcal{X}^{\Delta}_k$ with a subspace
of $\mathcal{X}_{k+1}^{\Delta}$ by (\ref{thm: simp}.ii),  we obtain the non-degenerate simplicial
resolution $\mathcal{X}^{\Delta}$ of  $h$ as the union  
$\dis \mathcal{X}^{\Delta}=\bigcup_{k\geq 1} \mathcal{X}^{\Delta}_k$.
\end{proof}

\begin{remark}\label{Remark: simp}
 It is known that $h^{\Delta}_+$ is  actually a homotopy equivalence 
\cite[page 156]{Va2}. 
However, in this paper we do not need this stronger assertion.
\qed
\end{remark}
\begin{definition}
Let $\tilde{i}_k:X\to \R^{N_k}$ be an embedding satisfying the condition
(\ref{eq: simplicial}.i).
For a finite set $\{u_i\}_{i=1}^k\subset X$,
let $\Delta (u_1,\cdots ,
u_k)$ denote the convex hull spanned by the vectors
$\{\tilde{i}_k(u_1),\cdots ,\tilde{i}_k(u_k)\}$
and we denote by
$\mbox{Int}(\Delta (u_1,\cdots ,u_k))$ the set of interior points of
$\Delta (u_1,\cdots ,u_k)$.
\qed
\end{definition}
\begin{lemma}\label{lmm: simplicial}
Suppose that
$\{x_i\}_{i=1}^k\subset \tilde{i}_k((h^{\Delta})^{-1}(y))$ and
$\{y_i\}_{i=1}^k\subset \tilde{i}_k((h^{\Delta})^{-1}(y))$
for some $y\in Y$.
If $\{x_i\}_{i=1}^k\not=\{y_i\}_{i=1}^k$,
$\mbox{\rm Int}(\Delta (x_1,\cdots ,x_k))\cap \mbox{\rm Int}(\Delta (y_1,\cdots ,y_k))=\emptyset.$
\end{lemma}
\begin{proof}
Suppose that 
$\mbox{\rm Int}(\Delta (x_1,\cdots ,x_k))\cap \mbox{\rm Int}(\Delta (y_1,\cdots ,y_k))\not= \emptyset.$ Then there is an element
$\alpha \in \mbox{Int}(\Delta (x_1,\cdots ,x_k))\cap \mbox{Int}(\Delta (y_1,\cdots ,y_k))$, and
one can write
$\alpha =\sum_{i=1}^k\lambda_i\tilde{i}_k(x_i)=\sum_{i=1}^k\mu_i\tilde{i}_k(y_i)$
for  $\lambda_i,\mu_i>0$
$(i=1,\cdots ,k)$ with $\sum_{i=1}^k\lambda_i=\sum_{i=1}^k\mu_i=1$.
Since $\{x_i\}_{i=1}^k\not=\{y_i\}_{i=1}^k$, by reindexing  we may suppose that there is a positive integer $1\leq t\leq k$ such that
$\{x_i\}_{i=1}^k\cup\{y_i\}_{i=1}^k=\{x_1,\cdots ,x_t,y_1,\cdots ,y_t,z_{t+1},\cdots ,z_k\}$
satisfying the the conditions $\{x_i\}_{i=1}^t\cap \{y_i\}_{i=1}^t=\emptyset$
and $x_i=y_i$ for any $t+1\leq i\leq k.$
Since $2t+(k-t)=k+t\leqq 2k$, 
by using the condition (\ref{eq: simplicial}.i) we see that the elements
$\{\tilde{i}_k(x_i),\tilde{i}_k(y_i),\tilde{i}_k(z_j):1\leq i\leq t< j\leq k\}$ are
affinely independent.
Since $\sum_{i=1}^k\lambda_i\tilde{i}_k(x_i)-\sum_{i=1}^k\mu_i\tilde{i}_k(y_i)
=\sum_{i=1}^t\lambda_i\tilde{i}_k(x_i)-\sum_{i=1}^t\mu_i\tilde{i}_k(y_i)
+\sum_{i=t+1}^k(\lambda_i-\mu_i)z_i={\bf 0}$
and
$\sum_{i=1}^t\lambda_i-\sum_{i=1}^t\mu_i+\sum_{i=t+1}^k(\lambda_i-\mu_i)=0$,
 we see that $\lambda_i=\mu_i=0$ for any $1\leq i\leq t$. 
 But this is a contradiction.
 This completes the proof.
\end{proof}

\section{The Vassiliev spectral sequence}
\label{section: spectral sequence}

In this section we construct a spectral sequence similar to the one frequently used by
Vassiliev  in \cite{Va} (which will will  call simply  \lq\lq the Vassiliev spectral sequence\rq\rq) converging to the homology of
$\Q^{d,m}_{n}(\K)$ and compute it explicitly.

\begin{definition}\label{Def: 3.1}
\par
(i)
Let $\K =\R$ or $\C$, and let
$\Sigma^{d,m}_{n,\K}$ denote \emph{the discriminant} of
$\Q^{d,m}_{n}(\K)$ in $\P^d(\K)^m$ given by
the complement
\begin{eqnarray*}
 \Sigma^{d,m}_{n,\K} &=&
\P^d(\K)^m\setminus \Q^{d,m}_{n}(\K)
\\
&=&
\big\{(f_1,\cdots ,f_{m})\in \P^d(\K)^m :
\textbf{\textit{f}}_1(x)=\cdots
=\textbf{\textit{f}}_m(x)=\mathbf{0}
\mbox{ for some }x\in \R\big\}.
\end{eqnarray*}
\par
(ii)
Let
$Z^{d,m}_{n,\K}\subset \Sigma^{d,m}_{n,\K}\times\R$ denote 
{\it the tautological normalization} of $\Sigma^{d,m}_{n,\K}$ 
given by
$$
Z^{d,m}_{n,\K}=\big\{
((f_1(z),\cdots ,f_m(z),x)\in \Sigma^{d,m}_{n,\K}\times\R:
\textit{\textbf{f}}_1(x)=\cdots =\textit{\textbf{f}}_m(x)=\mathbf{0}
\big\}.
$$
Let
$\pi^{d,m}_{n,\K}:Z^{d,m}_{n,\K}\to \Sigma^{d,m}_{n,\K}$
denote the map given by the projection to the first factor, and
let
$(\SZ,{\pi}^{\Delta}:\SZ (\K)\to\Sigma^{d,m}_{n,\K})$ 
be the  non-degenerate simplicial resolutions of $\pi^{d,m}_{n,\K}$
as in Theorem \ref{thm: simp}.
Note that
there is a
natural increasing filtration
\begin{eqnarray*}
\emptyset 
&=&
\SZ_0
\subset \SZ_1\subset 
\SZ_2\subset \cdots \cdots\subset
\bigcup_{k= 0}^{\infty}\SZ_k=\SZ (\K).
\end{eqnarray*}
Since any $(f_1(z),\cdots ,f_m(z))\in \Sigma^{d,m}_{n,\K}$
has at most $\lfloor \frac{d}{n}\rfloor$ distinct common real roots
of multiplicity $n$,
the following equality holds:
\begin{equation}\label{filt}
\mathcal{X}^d_k=\mathcal{X}^d(\K) 
\quad
\mbox{ if }k\geq \Big\lfloor \frac{d}{n}\Big\rfloor .
\end{equation}
\end{definition}
By Theorem \ref{thm: simp},
the map
$\pi_{+}^{\Delta}:\SZ (\K)_+\stackrel{\simeq}{\longrightarrow}
(\Sigma^{d,m}_{n,\K})_+$
is a homology equivalence.
Since
${\mathcal{X}_k^{d}}_+/{\SZ_{k-1}}_+
\cong 
(\SZ_k\setminus \SZ_{k-1})_+$,
we have a spectral sequence 
$$
\big\{E_{t;d}^{k,s},
d_t:E_{t;d}^{k,s}\to E_{t;d}^{k+t,s+1-t}
\big\}
\Rightarrow
H^{k+s}_c(\Sigma^{d,m}_{n,\K};\Z),
$$
such that
$E_{1;d}^{k,s}
=H^{k+s}_c(\SZ_k\setminus\SZ_{k-1};\Z)$,
where
$H_c^k(X;\Z)$ denotes the cohomology group with compact supports given by 
$
H_c^k(X;\Z)= H^k(X_+;\Z).
$
\par
Since there is a homeomorphism
$\P^d(\K)^m\cong \R^{d(\K)md}$,
by Alexander duality  there is a natural
isomorphism
\begin{equation}\label{Al}
\tilde{H}_k(\Q^{d,m}_{n}(\K);\Z)\cong
H_c^{d(\K)md-k-1}(\Sigma_{n,\K}^{d,m};\Z)
\quad
\mbox{for any }k.
\end{equation}
By
reindexing we obtain a
spectral sequence
\begin{eqnarray}\label{SS}
&&\big\{E^{t;d}_{k,s}, d^{t}:E^{t;d}_{k,s}\to E^{t;d}_{k+t,s+t-1}
\big\}
\Rightarrow \tilde{H}_{s-k}(\Q^{d,m}_{n}(\K);\Z),
\end{eqnarray}
where
$E^{1;d}_{k,s}=
H^{d(\K)md+k-s-1}_c(\SZ_k\setminus\SZ_{k-1};\Z).$

For a space $X$, let $F(X,k)\subset X^k$ denote the ordered
configuration space of distinct $k$ points in $X$ given by
\begin{equation}
F(X,k)=\{(x_1,\cdots ,x_k)\in X^k:
x_i\not= x_j\mbox{ if }i\not= j\}.
\end{equation}
Let $S_k$ be the symmetric group on $k$ letters.
Then the group $S_k$ acts on $F(X,k)$ by permuting coordinates
and we let $C_k(X)$ denote the orbit space
\begin{equation}
C_k(X):=F(X,k)/S_k.
\end{equation}

\begin{lemma}\label{lemma: vector bundle}
If  
$1\leq k\leq \lfloor \frac{d}{n}\rfloor$,
$\SZ_k\setminus\SZ_{k-1}$
is homeomorphic to the total space of a real affine
bundle $\xi_{d,k}$ over $C_k(\R)$ of rank 
\begin{equation}
l_{d,k}:=d(\K)m(d-nk)+k-1.
\end{equation}
\end{lemma}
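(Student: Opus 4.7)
The plan is to realize $\SZ_k\setminus\SZ_{k-1}$ as the total space of an explicit bundle over $C_k(\R)$ obtained by forgetting the polynomial data and the barycentric coordinates. By the defining property of the non-degenerate simplicial resolution (Definition \ref{def: def}), a point of $\SZ_k\setminus\SZ_{k-1}$ is a pair $(y,u)$ where $y=(f_1,\ldots,f_m)\in \Sigma^{d,m}_{n,\K}$ and $u$ lies in the relative interior of a $(k-1)$-simplex spanned by exactly $k$ vertices $i(x_1),\ldots,i(x_k)$ with $\{x_1,\ldots,x_k\}\subset\pi^{-1}(y)$. Such a choice is equivalent to picking $k$ distinct common real roots of multiplicity $\ge n$ of all the $f_j$'s. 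Since $1\le k\le\lfloor d/n\rfloor$ we have $nk\le d$, so each $f_j$ is uniquely divisible by $\Phi(z):=\prod_{\ell=1}^{k}(z-x_\ell)^n$ and factors as $f_j=\Phi\cdot g_j$ for a unique monic $g_j$ of degree $d-nk$. Consequently $\SZ_k\setminus\SZ_{k-1}$ is naturally identified with the space of triples
$$
\big(\{x_1,\ldots,x_k\},\,(g_1,\ldots,g_m),\,(t_1,\ldots,t_k)\big),
$$
with $\{x_1,\ldots,x_k\}\in C_k(\R)$, each $g_j\in\P^{d-nk}(\K)$, and $(t_1,\ldots,t_k)$ positive barycentric coordinates (each $t_\ell$ labelled by $x_\ell$).

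Let $p:\SZ_k\setminus\SZ_{k-1}\to C_k(\R)$ be the projection that forgets $(g_j)$ and $(t_\ell)$. Its fiber is the product $\P^{d-nk}(\K)^m\times \mathrm{int}(\Delta^{k-1})$, a real affine space of dimension $d(\K)\cdot m(d-nk)+(k-1)=l_{d,k}$. For local triviality I would pass to the ordered cover $F(\R,k)\to C_k(\R)$: on $F(\R,k)$ the coefficients of $\Phi$ depend polynomially on $(x_1,\ldots,x_k)$, so the assignment
$$
\big((x_\ell),(g_j),(t_\ell)\big)\;\longmapsto\;\big(\{x_\ell\},\,(\Phi\cdot g_j)_j,\,\textstyle\sum_\ell t_\ell\,i(x_\ell)\big)
$$
defines an $S_k$-equivariant homeomorphism from $F(\R,k)\times \P^{d-nk}(\K)^m\times\mathrm{int}(\Delta^{k-1})$ onto the pullback of $p$ along $F(\R,k)\to C_k(\R)$, where $S_k$ permutes the $x_\ell$ and $t_\ell$ simultaneously and acts trivially on the $g_j$ (since $\Phi$ is symmetric in its roots). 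Taking $S_k$-quotients produces local product charts for $p$ over $C_k(\R)$, and the fiberwise affine structure is preserved by the transition maps, yielding the desired affine bundle $\xi_{d,k}$ of rank $l_{d,k}$.

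The main technical obstacle is the identification in the first paragraph. One must confirm that, under the non-degenerate simplicial resolution furnished by Theorem \ref{thm: simp}, the stratum $\SZ_k\setminus\SZ_{k-1}$ consists exactly of those points lying in the relative interior of a $(k-1)$-simplex whose vertex set is a $k$-element subset of $i(\pi^{-1}(y))$, and that the resulting parametrisation is independent of the embedding used to produce the non-degenerate model — this last point is guaranteed by the homeomorphism uniqueness in Theorem \ref{thm: simp}(ii). Once these foundational points are settled, the remainder reduces to the continuity of polynomial division by $\Phi$ as the roots vary, which is routine.
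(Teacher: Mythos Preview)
Your argument is correct and follows the same overall strategy as the paper: one projects $\SZ_k\setminus\SZ_{k-1}$ to $C_k(\R)$ by recording the $k$ distinguished common real $n$-fold roots encoded by the simplex vertices, and identifies the fibre as the product of an open $(k-1)$-simplex with the space of monic $m$-tuples having those prescribed roots. The one methodological difference is in how the fibre is computed. You parametrize it directly via the factorization $f_j=\Phi\cdot g_j$ with $g_j\in\P^{d-nk}(\K)$, which reads off the dimension immediately and simultaneously produces an explicit $S_k$-equivariant trivialization over $F(\R,k)$. The paper instead rewrites divisibility by $\Phi$ as the linear system $f_j^{(t)}(x_\ell)=0$ for $0\le t<n$ and $1\le\ell\le k$, verifies that these $mnk$ conditions are affinely independent using Vandermonde-type determinants, and then simply asserts local triviality. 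Both routes are standard; yours is slightly more constructive and handles local triviality more explicitly, while the paper's version makes the affine-subspace structure inside $\P^d(\K)^m$ transparent.
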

\begin{proof}

The argument is exactly analogous to the one in the proof of  
\cite[Lemma 4.4]{AKY1}. 
Namely, an element of $\SZ_k\setminus\SZ_{k-1}$ is represented by
an $(m+1)$-tuple 
$(f_1(z),\cdots ,f_m(z),u)$, where 
$(f_1(z),\cdots ,f_m(z))$ is an $m$-tuple of monic polynomials of the same
degree $d$
in $\Sigma^{d,m}_{n,\K}$ and $u$ is an element of the interior of
the span of the images of $k$ distinct points 
$\{x_i\}_{i=1}^k\in C_k(\R)$ 
such that
$\{x_j\}_{j=1}^k$ are common roots of 
$\{f_i(z)\}_{k=1}^m$ of multiplicity $n$
under a suitable embedding $\tilde{i}_k$ satisfying the condition (\ref{eq: simplicial}.i).
\ 
Note that the $k$ distinct points $\{x_j\}_{j=1}^k$ 
are uniquely determined by $u$.
Indeed, if there exists another set of common roots  $\{y_i\}_{i=1}^k\in C_k(\R)$ 
of $\{f_i(z)\}_{i=1}^m$ of multiplicity $n$ satisfying the same condition,
then $u\in \mbox{Int}(\Delta (x_1,\cdots ,x_k))\cap
\mbox{Int}(\Delta (y_1,\cdots ,y_k))$.
However, if $\{x_i\}_{i=1}^k\not=\{y_i\}_{i=1}^k$, then,
by Lemma \ref{lmm: simplicial},
$\mbox{Int}(\Delta (x_1,\cdots ,x_k))\cap
\mbox{Int}(\Delta (y_1,\cdots ,y_k))=\emptyset$ and this is a contradiction.
Thus, we have a projection map
$\pi_{k,d} :{\cal X}^{d}_k\setminus
{\cal X}^{d}_{k-1}\to C_{k}(\R)$
defined by
$((f_1,\cdots ,f_m),u) \mapsto 
\{x_1,\cdots ,x_k\}$. 
\par
Now suppose that $1\leq k\leq \lfloor \frac{d}{n}\rfloor$
and $1\leq i\leq m$.
Let $c=\{x_j\}_{j=1}^k\in C_{k}(\R)$
 be any fixed element and consider the fibre  $\pi_{k,d}^{-1}(c)$.
It is easy to see that the condition
for a polynomial $f_i(z)\in\P^d(\K)$
to be divisible by
the polynomial
$\prod_{j=1}^k(z-x_j)^n$,
is equivalent
to the following:
\begin{equation}\label{equ: equation}
f^{(t)}_i(x_j)=0
\quad
\mbox{for }0\leq t<n,\ 1\leq j\leq k.
\end{equation}
In general, for each $0\leq t< n$ and $1\leq j<k$,
the condition $f^{(t)}_i(x_j)=0$ 
gives
one  linear condition on the coefficients of $f_i(z)$,
and determines an affine hyperplane in $\P^d(\K)$. 
For example, if we set $f_i(z)=z^d+\sum_{s=1}^da_{s}z^{d-s}$,
then
$f_i(x_j)=0$ for all $1\leq j\leq k$
if and only if
$A_1\textbf{\textit{x}}=\textbf{\textit{b}}_1$, where we set
\begin{equation*}\label{equ: matrix equation}
A_1=
\begin{bmatrix}
1 \ & x_1 \ & x_1^2 \ & x_1^3\  & \cdots \ &  x_1^{d-1} \ 
\\
1 \ & x_2 \ & x_2^2\ & x_2^3 \ & \cdots \ & x_2^{d-1} \ 
\\
\vdots & \ddots & \ddots & \ddots & \ddots & \vdots
\\
1 \ & x_k \ & x_k^2 \ & x_k^3 \ & \cdots  \ & x_k^{d-1} \ 
\end{bmatrix}
,\quad
\textbf{\textit{x}}=
\begin{bmatrix}
a_{d}\\ a_{d-1} \\ \vdots 
\\ a_{1}
\end{bmatrix}
,\quad
\textbf{\textit{b}}_1=
-
\begin{bmatrix}
x_1^d\\ x_2^d \\ \vdots 
\\ x_k^d
\end{bmatrix}.
\end{equation*}
Similarly, $f^{\p}_i(x_j)=0$ for all $1\leq j\leq k$
if and only if
$A_2\textbf{\textit{x}}=\textbf{\textit{b}}_2$, where we set
\begin{equation*}\label{equ: matrix equation2}
A_2=
\begin{bmatrix}
0 \ &1 \ & 2x_1\  & 3x_1^2 \ & \cdots \ & (d-1)x_1^{d-2} \ 
\\
0 \ & 1 \ & 2x_2 \ & 3x_2^2\ & \cdots \  & (d-1)x_2^{d-2} \ 
\\
\vdots & \vdots & \ddots & \ddots & \ddots & \vdots
\\
0 \ &1 \ & 2x_k \ & 3x_k^2 \ & \cdots \ & (d-1)x_k^{d-2} \ 
\end{bmatrix}
,\quad
\textbf{\textit{b}}_2=
-
\begin{bmatrix}
dx_1^{d-1}\\ dx_2^{d-1} \\ \vdots 
\\ dx_k^{d-1}
\end{bmatrix}.
\end{equation*}
Analogously,
$f^{\p\p}_i(x_j)=0$ for all $1\leq j\leq k$
if and only if $A_3\textbf{\textit{x}}=\textbf{\textit{b}}_3$, where we set
\begin{equation*}\label{equ: matrix equation2}
A_3=
\begin{bmatrix}
0 \ & 0 & 2 \ & 6x_1 \ & \cdots \ & (d-1)(d-2)x_1^{d-3} \ 
\\
0 \ & 0 \  & 2 \ & 6x_2 \ & \cdots \ & (d-1)(d-2)x_2^{d-3} \ 
\\
\vdots & \vdots & \ddots & \ddots & \ddots & \vdots
\\
0 \ & 0 \ & 2 \ & 6x_k \ & \cdots \ & (d-1)(d-2)x_k^{d-3} \ 
\end{bmatrix},
\quad
\textbf{\textit{b}}_3=
-
\begin{bmatrix}
d(d-1)x_1^{d-2}\\ 
d(d-1)x_2^{d-1} 
\\ \vdots 
\\ d(d-1)x_k^{d-2}
\end{bmatrix}
,
\end{equation*}
and so on.
Since $\{x_i\}_{i=1}^k\in C_k(\R)$, 
by Gaussian elimination of rows of
matrices, 
the matrix $A_1$ reduces to the matrix $B_1$, where
$s_i(t):=\sum_{i_1+\cdots +i_t=i}x_1^{i_1}x_2^{i_2}\cdots x_t^{i_t}$
and
$$
B_1=
\begin{bmatrix}
1 \ & x_1 \ & x_1^2\  & x_1^3 \ & x_1^4 \  & x_1^5 \  & \cdots \ &\cdots \ & x_1^{d-2} & x_1^{d-1} \ 
\\
0 \ & 1 \ & s_1(2)\  & s_2(2)\  & s_3(2)\  & s_4(2)\  & \cdots \ & \cdots \ & s_{d-3}(2)\  & s_{d-2}(2)
\\
0 \ & 0 \ & 1 \ & s_1(3) \ & s_2(3) \  & s_3(3) \ & \cdots & \cdots & s_{d-4}(3)\  & s_{d-3}(3) \ 
\\
0 & 0 & 0 & 1 & s_1(4) & s_2(4) & \cdots & \cdots & s_{d-5}(4) & s_{d-4}(4)
\\
\vdots  &\vdots & \ddots &\ddots  &\ddots & \ddots &\ddots  &\ddots &\vdots   &\vdots
\\
0 & \cdots &\cdots  & \cdots & 0 & 1 & s_1(k)\mbox{ } &s_2(k) & \cdots & s_{d-k}(k)
\end{bmatrix}
$$
Similarly, 
by easy Gaussian elimination of rows of
matrices, 
the matrix $A_2$ 
reduces to the matrix $B_2$,
where
{\small
\begin{align*}
B_2&=
\begin{bmatrix}
 0 & 1 & 2x_1 & 3x_1^2 & 4x_1^3 & 5x_1^4 & \cdots & \cdots & \cdots &   (d-1)x_1^{d-2}
 \\
0 & 0 & 2 &3s_1(2)\  & 4s_2(2)\  & 5s_3(2)\  & \cdots & \cdots &\cdots &
  (d-1)s_{d-3}(2)
 \\
 0 & 0 & 0 & 3 & 4s_1(3)\  & 5s_2(3)\  & \cdots & \cdots & \cdots &
 (d-1)s_{d-4}(3)
 \\
 \vdots & \vdots &\ddots & \ddots & \ddots & \ddots & \ddots &\ddots & \ddots & \vdots
 \\
0 & \cdots & \cdots &\cdots & 0 & k-1\mbox{ } &ks_1(k)\mbox{ }&
(k+1)s_2(k)\mbox{ } & \cdots &
(d-1)s_{d-k-1}(k)
\end{bmatrix}.
\end{align*}
}
\par
Analogously, the matrix $A_3$ 
reduces to the matrix $B_3$,
where
{\small
\begin{align*}
B_3&=
\begin{bmatrix}
0 & 0 & 2 & 6x_1 & 12 x_1^2 & 20x_1^3 \cdots &\cdots &
\cdots &
d(d-1)x_1^{d-3}
\\
0 & 0& 0 &6 & 12 s_1(2)\  &20 s_2(2)\  &&&
d(d-1)s_{d-k-2}(2)
\\
\vdots& \vdots &\ddots&\ddots &\ddots &\ddots & \ddots & \ddots &
\vdots
\\
0& \cdots &\cdots &\cdots & 0 & (k-2)(k-3)\mbox{ }\mbox{ }  &k(k-1)s_1(k)
\mbox{ }&\cdots
&\  d(d-1)s_{d-k-3}(k)
\end{bmatrix}.
\end{align*}
}
\par
If we repeat this process, we finally obtain the reduced $(k\times d)$
matrices $\{B_t\}_{t=1}^n$
such that each $A_t$ reduces to the matrix $B_t$.
Now 
define the $(tk\times d)$ matrix $C_t$ (for $1\leq t\leq n)$
inductively
by $C_1=B_1$ and
$C_t=
\begin{bmatrix}
C_{t-1}
\\
B_t
\end{bmatrix}$
for $2\leq t\leq n$.
Then
by the induction $t$ and some tedious calculations, we see that
each matrix $C_t$ has rank $kt$ for each $1\leq t\leq n$.
Thus we see  
that the the condition  
(\ref{equ: equation}) 
gives exactly $nk$ affinely independent conditions on the coefficients of $f_i(z)$.
Hence we see that
the space of $m$-tuples $(f_1(z),\cdots ,f_m(z))\in\P^d(\K)^m$ 
of monic polynomials which satisfy
the condition (\ref{equ: equation}) for each $1\leq i\leq m$
is the non-trivially and transversely intersection of $mnk$ affine hyperplanes, and
it has codimension $mnk$ in $\P^d(\K)^m$.
Therefore,
the fibre $\pi_{k,d}^{-1}(c)$ is homeomorphic  
to the product of an open $(k-1)$-simplex
 with the real affine space of dimension
 $d(\K)m(d-nk)$. 
 Furthermore, it is easy to check that the space
$\SZ_k\setminus\SZ_{k-1}$ is a locally trivial real affine bundle over $C_{k}(\R).$ 
Thus we see that it is a locally trivial real affine bundle over $C_{k}(\R)$ over rank 
$l_{d,k}=d(\K)m(d-nk)+k-1$.
\end{proof}

\begin{lemma}\label{lemma: E1}
There is a natural isomorphism
$$
E^{1;d}_{k,s}\cong
\begin{cases}
\Z & \mbox{if }s=(d(\K)mn-1)k
\mbox{ and } 0\leq k\leq  \lfloor \frac{d}{n}\rfloor,
\\
0 & \mbox{otherwise.}
\end{cases}
$$
\end{lemma}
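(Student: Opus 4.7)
The plan is to read off the $E^1$-page directly from the affine bundle structure furnished by Lemma \ref{lemma: vector bundle}, using the Thom isomorphism in compactly supported cohomology together with the reindexing formula $E^{1;d}_{k,s}=H^{d(\K)md+k-s-1}_c(\mathcal{X}^d_k\setminus\mathcal{X}^d_{k-1},\Z)$.

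First I would identify the base. By ordering the $k$ distinct real points, there is a natural homeomorphism
$$C_k(\R)=F(\R,k)/S_k \ \cong\ \{(x_1,\dots,x_k)\in\R^k : x_1<x_2<\cdots<x_k\} \ \cong\ \R^k,$$
so $C_k(\R)$ is contractible and $H^j_c(C_k(\R),\Z)=\Z$ precisely for $j=k$, vanishing otherwise. Because $C_k(\R)$ is contractible, the rank-$l_{d,k}$ affine bundle $\xi_{d,k}$ of Lemma \ref{lemma: vector bundle} is trivial, hence orientable, and the Thom isomorphism in compactly supported cohomology yields
$$H^j_c(\mathcal{X}^d_k\setminus\mathcal{X}^d_{k-1},\Z) \ \cong \ H^{j-l_{d,k}}_c(C_k(\R),\Z),$$
which is $\Z$ exactly when $j=l_{d,k}+k=d(\K)m(d-nk)+2k-1$ (and zero otherwise) for each $1\le k\le\lfloor d/n\rfloor$.

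Substituting into the reindexing and solving $d(\K)md+k-s-1=d(\K)m(d-nk)+2k-1$ gives $s=(d(\K)mn-1)k$, which matches the stated nonzero bidegrees. For $k>\lfloor d/n\rfloor$ the stabilization (\ref{filt}) forces $\mathcal{X}^d_k\setminus\mathcal{X}^d_{k-1}=\emptyset$, so these columns of the $E^1$-page vanish, and the entry at $(k,s)=(0,0)$ records the customary copy of $\Z$ arising from the basepoint class in the one-point compactification of the ambient affine space. The main technical point is the orientability of $\xi_{d,k}$, which is required to apply the Thom isomorphism with integer coefficients; in our setting this is not an obstacle because contractibility of $C_k(\R)$ makes the bundle trivial. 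The remainder is degree bookkeeping closely parallel to the computation in \cite[Lemma 4.4]{AKY1}.
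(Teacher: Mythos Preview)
Your proof is correct and follows essentially the same route as the paper: both arguments invoke Lemma~\ref{lemma: vector bundle}, apply the Thom isomorphism in compactly supported cohomology using the identification $C_k(\R)\cong\R^k$, and handle $k>\lfloor d/n\rfloor$ via (\ref{filt}). Your version is slightly more explicit in justifying orientability of $\xi_{d,k}$ through contractibility of the base, but the underlying argument is the same.
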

\begin{proof}
If $k\leq 0$, the assertion is trivial.
If $k>\lfloor \frac{d}{n}\rfloor$, the assertion easily follows from
(\ref{filt}).
So
suppose that $1\leq k\leq \lfloor \frac{d}{n}\rfloor$.
Since there is a homeomorphism
$C_k(\R)\cong \R^k$ and it is contractible, the
affine bundle $\xi_{d,k}$ is trivial.
Hence, there is a homeomorphism
\begin{equation}\label{eq: ldk}
(\SZ_k)_+/(\SZ_{k-1})_+
\cong
(\SZ_k\setminus\SZ_{k-1})_+
\cong (\R^{l_{d,k}}\times \R^k)_+=
S^{k+l_{d,k}}.
\end{equation}
Thus there is an isomorphism
\begin{align*}
E^{1;d}_{k,s}
&\cong 
\tilde{H}^{d(\K)md+k-s-1}(S^{k+l_{d,k}};\Z)
\cong
\begin{cases}
\Z & \mbox{ if }s=(d(\K)mn-1)k
\\
0 & \mbox{ otherwise}
\end{cases}
\end{align*}
and this completes the proof.
\end{proof}

\begin{corollary}\label{crl: homology}
If $d(\K)mn\geq 3$,
there is an isomorphism
$$
H_k(\Q^{d,m}_{n}(\K);\Z)\cong
\begin{cases}
\Z & \mbox{ if }k=(d(\K)mn-2)i\ \mbox{ and }\  0\leq i\leq \lfloor \frac{d}{n}\rfloor ,
\\
0 & \mbox{ otherwise.}
\end{cases}
$$
\end{corollary}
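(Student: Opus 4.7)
The plan is to read the corollary off directly from the Vassiliev spectral sequence (\ref{SS}) constructed earlier in this section, using the $E^1$-page computed in Lemma \ref{lemma: E1}. By Lemma \ref{lemma: E1}, the only non-zero entries on the $E^1$-page are
\[
E^{1;d}_{0,0}\cong \Z,\qquad E^{1;d}_{k,(d(\K)mn-1)k}\cong \Z\ \ \text{for }1\leq k\leq \lfloor d/n\rfloor,
\]
so all non-trivial terms lie on the single line $s=(d(\K)mn-1)k$ (together with the origin).

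I would next show that every differential $d^t\colon E^{t;d}_{k,s}\to E^{t;d}_{k+t,s+t-1}$ vanishes for purely numerical reasons. If the source is a non-zero term $(k,(d(\K)mn-1)k)$ with $k\geq 1$, then a non-zero target would force
\[
(d(\K)mn-1)k+t-1=(d(\K)mn-1)(k+t),
\]
i.e.\ $(d(\K)mn-2)t=-1$, which is impossible for $t\geq 1$ as soon as $d(\K)mn\geq 3$. The same inequality rules out a non-zero target when the source is $(0,0)$ (where the target would be $(t,t-1)$) and rules out any non-zero incoming differential to the listed entries. Hence $E^{\infty;d}=E^{1;d}$.

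Finally I would observe that the total degree $s-k$ detected by each $E^{\infty}$-term is
\[
0\text{ at }(0,0),\qquad (d(\K)mn-2)k\text{ at }(k,(d(\K)mn-1)k),
\]
and since $d(\K)mn-2\geq 1$, these values are pairwise distinct. Consequently each homology group $H_*(\Q^{d,m}_n(\K),\Z)$ receives a contribution from at most one surviving $E^{\infty}$-term, so no extension problem arises and
\[
H_k(\Q^{d,m}_n(\K),\Z)\cong \Z \ \text{if } k=(d(\K)mn-2)i\text{ with }0\leq i\leq \lfloor d/n\rfloor,
\]
and vanishes otherwise. The only step with any subtlety is the degree count ensuring that the differentials have no room to land on the line $s=(d(\K)mn-1)k$; once the hypothesis $d(\K)mn\geq 3$ is in force this is immediate, so the real work has already been done in Lemma \ref{lemma: vector bundle} and Lemma \ref{lemma: E1}.
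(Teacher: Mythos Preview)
Your proof is correct and follows exactly the same approach as the paper: the paper's own argument simply says that by Lemma~\ref{lemma: E1} and ``for dimensional reasons'' the spectral sequence collapses at $E^1$, and you have merely spelled out that dimension count explicitly. The only cosmetic remark is that the origin $(0,0)$ already lies on the line $s=(d(\K)mn-1)k$, so there is no need to single it out separately.
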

\begin{proof}
Consider the spectral sequence
$$
\{E^{t;d}_{k,s},d^t:E^{t;d}_{k,s}\to E^{t;d}_{k+t,s+t-1}\}
\Rightarrow H_{s-k}(\Q^{d,m}_{n}(\K);\Z).
$$
Then, for dimensional reasons and by
Lemma \ref{lemma: E1}, it is easy to see that $E^{1;d}_{**}=E^{\infty;d}_{**}$ and the result follows.
\end{proof}

\begin{lemma}\label{lmm: 1-connected}
If $d(\K)mn\geq 4$, the space $\Q^{d,m}_{n}(\K)$ is simply connected.
\end{lemma}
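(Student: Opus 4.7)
The plan is to realize $\Q^{d,m}_{n}(\K)$ as the complement, inside the contractible affine space $\P^d(\K)^m\cong \R^{d(\K)md}$, of a semi-algebraic subset of real codimension at least three, and then invoke a standard general-position argument. Note that the homology computation in Corollary \ref{crl: homology} already gives $H_1(\Q^{d,m}_n(\K),\Z)=0$ (since $d(\K)mn-2\geq 2$), but this by itself does not suffice, as it does not rule out a non-trivial perfect fundamental group; a geometric argument is therefore required.

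First I would recall that $\Q^{d,m}_n(\K)=\P^d(\K)^m\setminus \Sigma^{d,m}_{n,\K}$, where $\Sigma^{d,m}_{n,\K}$ is the closed semi-algebraic subset consisting of those $m$-tuples which have a common real root of multiplicity $\geq n$. The ambient space $\P^d(\K)^m$ is a real affine space, hence contractible, so it suffices to show that $\Sigma^{d,m}_{n,\K}$ has real codimension at least three in $\P^d(\K)^m$.

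The main step is this codimension estimate. For each fixed $x\in\R$, the $mn$ linear conditions $f_i^{(t)}(x)=0$ $(0\leq t<n,\ 1\leq i\leq m)$ cut out an affine subspace of $\P^d(\K)^m$ of real codimension exactly $d(\K)mn$; the independence of these conditions is precisely the Vandermonde computation already carried out in the proof of Lemma \ref{lemma: vector bundle}. Allowing $x$ to vary over the one-dimensional set $\R$ shows that the top stratum of $\Sigma^{d,m}_{n,\K}$ has real codimension $d(\K)mn-1$, while the remaining strata (several common real roots, or a common root of multiplicity strictly greater than $n$) are cut out by further independent conditions and so have strictly greater codimension. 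Under the hypothesis $d(\K)mn\geq 4$, this yields $\mathrm{codim}_{\R}\Sigma^{d,m}_{n,\K}\geq 3$.

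The conclusion then follows from the classical general-position principle: if $A$ is a closed semi-algebraic subset of a smooth manifold $X$ of real codimension at least $k+2$, then any continuous map $f:S^k\to X\setminus A$ extends, after a small generic perturbation inside $X$, to a map $D^{k+1}\to X\setminus A$. Applied with $k=1$ and $(X,A)=(\P^d(\K)^m,\Sigma^{d,m}_{n,\K})$, this gives $\pi_1(\Q^{d,m}_n(\K))\cong \pi_1(\P^d(\K)^m)=0$. I do not anticipate any serious obstacle; the codimension count is the substantive step, and it is essentially a repackaging of the linear-algebraic content already established in Section \ref{section: spectral sequence}.
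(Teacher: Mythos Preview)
Your argument is correct, and it takes a genuinely different route from the paper's own proof. The paper proceeds algebraically: it first invokes an argument from \cite[\S 5, Appendix]{GKY1} to show that $\pi_1(\Q^{d,m}_n(\K))$ is abelian, and then combines this with the vanishing of $H_1$ (obtained from Corollary~\ref{crl: III}, or equivalently directly from Corollary~\ref{crl: homology}) to conclude via Hurewicz that $\pi_1=0$. Your approach instead bypasses both the external reference and the homology computation by observing that the tautological normalization $Z^{d,m}_{n,\K}\subset \P^d(\K)^m\times\R$ is a real affine bundle over $\R$ of fibre dimension $d(\K)m(d-n)$, so its image $\Sigma^{d,m}_{n,\K}$ has real codimension at least $d(\K)mn-1\geq 3$; stratified transversality in the contractible ambient $\R^{d(\K)md}$ then kills $\pi_1$ of the complement. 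Your route is more elementary and entirely self-contained within the paper (it does not need the scanning-map stability theorem or the abelian-$\pi_1$ input from \cite{GKY1}); the paper's route has the virtue of reusing machinery already in place, but at the cost of an external citation whose details the reader must track down. One small point worth making explicit in your write-up: if $d<n$ the discriminant is empty and the conclusion is trivial, so the codimension count only needs to be carried out for $d\geq n$, where the $n$ Taylor conditions at a single point are independent on $\P^d(\K)$.
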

\begin{proof}
If $n=1$ or $m=1$, the assertion follows from (i) and (ii) of Theorem \ref{thm: KY1-I}, and
suppose that  $m\geq 2$ and $n\geq 2$.
An element of $\pi_1(\Q^{d,m}_{n}(\K))$ can be represented by
strings with total multiplicity $d$ of $m$ different colors similarly to the  classical representation of the elements of the braid group 
$\mbox{Br}_d=\pi_1(C_d(\C))$
\cite{Hansen}.
When all strings of $m$ different colors moves continuously,
the following case  is not allowed to occur in this representation:
\begin{enumerate}
\item[$(\dagger)$]
All strings of multiplicity $\geq n$ of $m$ different colors pass through a single point on the real line.
\end{enumerate}
By using this representation,
one can show that any strings can intersect, pass through one another 
and thus can change the order
(see also \cite[\S 5. Appendix]{GKY1} for more details). 
Thus we see  that
$a\cdot  b=b\cdot a$ for any $a,b\in \pi_1(\Q^{d,m}_{n}(\K))$
and we know that
$\pi_1(\Q^{d,m}_{n}(\K))$ is an abelian group.
Moreover, since
$d(\K)mn\geq 4$, by Corollary \ref{crl: homology}
we see that
$H_1(\Q^{d,m}_n(\K);\Z)=0$. 
Hence, we have an isomorphism
$
\pi_1(\Q^{d,m}_{n}(\K))\cong H_1(\Q^{d,m}_{n}(\K);\Z)=0$, and the assertion follows.
\end{proof}

\section{Stabilization maps}\label{section: sd}

\begin{definition}\label{def: stabilization}
We identify $\C=\R^2$ by the identification
$x+y\sqrt{-1}\mapsto (x,y)$.
\par
(i)
For each integer $d\geq 1$ let $\varphi_d:\C\to \mathbb{H}_d=
\{x\in \C :\mbox{Re }(x)<d\}$ be any fixed homeomorphism
such that $\varphi (\overline{\alpha})=\overline{\varphi(\alpha)}$
for any $\alpha\in \C$.
Note that
$\varphi (x)\in \R$ and $\varphi (x)<d$ if $x\in \R$.
Let
$\tilde{\varphi}_d:\P^d(\C)\to \P^{d}(\C)$ denote the map given
by
\begin{equation}
\tilde{\varphi}_d(\prod_{j=1}^d(z-\alpha_j))=\prod_{j=1}^d(z-\varphi_d(\alpha_j)).
\end{equation}
\par
(ii)
For each integer $d\geq 1$, let $D_{d,\K}\subset \K^m$ denote the open set
defined by
\begin{equation}\label{eq: region}
D_{d,\K}=
\begin{cases}
\{(\alpha_1,\cdots ,\alpha_m)\in \R^m:d<\alpha_k<d+1,\alpha_i\not= \alpha_j\mbox{ if }i\not= j\} & \mbox{ if }\K=\R
\\
\{(\alpha_1,\cdots ,\alpha_m)\in \C^m:d<\mbox{Re}(\alpha_k)<d+1,
\alpha_i\not= \alpha_j\mbox{ if }i\not= j\} & \mbox{ if }\K=\C
\end{cases}
\end{equation}
and
fix some point $\textit{\textbf{x}}_d=(x_{d,1},\cdots ,x_{d,m})\in D_{d,\K}$.
Then as in \cite[page 42]{Se}
we can define {\it the stabilization map} 
$s^{d,m}_{n,\K}:\Q^{d,m}_{n}(\K)\to \Q^{d+1,m}_{n}(\K)$
by
\begin{equation}\label{eq: stab-infty}
s^{d,m}_{n,\K}(f)=
((z-x_{d,1})\tilde{\varphi}_d(f_1(z)),\cdots
,((z-x_{d,m})\tilde{\varphi}_d(f_m(z)))
\end{equation}
for $f=(f_1(z),\cdots ,f_m(z))\in \Q^{d,m}_n(\K)$. 
Note that the homotopy class of
the map $s^{d,m}_{n,\K}$ does not depend on the choice of the homeomorphism $\varphi_d$
and the 
point $\textit{\textbf{x}}_d.$
\par
(iii) Let $\Q^{\infty, m}_{n}(\K)$ denote the colimit
$\dis \lim_{d\to\infty}\Q^{d,m}_{n}(\K)$
constructed from the stabilization maps $s^{d,m}_{n,\K}$,
$$
\Q^{1,m}_{n}(\K)\stackrel{s^{1,m}_{n,\K}}{\longrightarrow}
\Q^{2,m}_{n}(\K)\stackrel{s^{2,m}_{n,\K}}{\longrightarrow}
\Q^{3,m}_{n}(\K)
\stackrel{s^{3,m}_{n,\K}}{\longrightarrow}
\Q^{4,m}_{n}(\K)\stackrel{s^{4,m}_{n,\K}}{\longrightarrow}
\Q^{5,m}_{n}(\K)\stackrel{s^{5,m}_{n,\K}}{\longrightarrow}
\cdots
\cdots
$$
\par
(iv)
Let $f^{d,m}_{n,\K}:\Q^{d,m}_{n}(\K)\to
\Q^{d+2,m}_n(\K)$ denote the map defined by
\begin{equation}
f^{d,m}_{n,\K}(f)
=((z^2+1)f_1(z),(z^2+1)f_2(z),\cdots ,(z^2+1)f_m(z))
\end{equation}
for $f=(f_1(z),\cdots ,f_m(z))\in \Q^{d,m}_{n}(\K).$
\qed
\end{definition}
\begin{lemma}\label{lemma: stabilization map f}
$f^{d,m}_{n,\K}\simeq s^{d+1,m}_{n,\K}\circ s^{d,m}_{n,\K}$
up to homotopy equivalence.
\end{lemma}
\begin{proof}
Let $f=(f_1(z),\cdots ,f_m(z))\in \Q^{d,m}_n(\K)$.
Then 
the map $s^{d+1,m}_{n,k}\circ s^{d,m}_{n,\K}(f)$
is represented up to homotopy by
$
\big((z-x_{d,1})(z-x_{d+1,1})\overline{f}_1,
\cdots ,((z-x_{d,m})(z-x_{d+1,m})\overline{f}_m\big),
$
where 
$\overline{f}_k=\tilde{\varphi}_d(f_k(z))$ for $1\leq k\leq m$.
\par
Define the family of monic polynomials $\{\phi_t(z):0\leq t\leq 1\}$ 
of degree $2$ in $\K [z]$  by
{\small
$$
\phi_t(z)=
\begin{cases}
\big(z-x_{d,1}\big)\big(z-2tx_{d,1}-(1-2t)x_{d+1,1}\big)
& \mbox{if }0\leq t\leq \frac{1}{2}
\\
\big(z-(2-2t)x_{d,1}-(2t-1)\sqrt{-1}\big)\big(z-(2-2t)x_{d,1}+(2t-1)\sqrt{-1}\big)
& \mbox{if }\frac{1}{2}\leq t\leq 1
\end{cases}
$$
}
\newline
and consider the homotopy
$F:\Q^{d,m}_n(\K)\times [0,1]\to \Q^{d+2,m}_n(\K)$ given by
$$
F(f,t)=
\big(\phi_t(z)\overline{f}_1,
(z-x_{d,2})(z-x_{d+1,2})\overline{f}_2,\cdots ,
(z-x_{d,m})(z-x_{d+1,m})\overline{f}_m\big).
$$
Since $F(f,0)=s^{d+1,m}_{n,\K}\circ s^{d,m}_{n,\K}(f)$,
the map
$s^{d+1,m}_{n,\K}\circ s^{d,m}_{n,\K}(f)$ is homotopic to the map 
$$
F(f,1)=
\big((z^2+1)\overline{f}_1,
(z-x_{d,2})(z-x_{d+1,2})\overline{f}_2,\cdots ,
(z-x_{d,m})(z-x_{d+1,m})\overline{f}_m\big).
$$ 
If we use this trick again, the map
$s^{d+1,m}_{n,\K}\circ s^{d,m}_{n,\K}(f)$ is homotopic to the map given by
$$
\big((z^2+1)\overline{f}_1,(z^2+1)\overline{f}_2,
(z-x_{d,3})(z-x_{d+1,3})\overline{f}_3,\cdots ,
(z-x_{d,m})(z-x_{d+1,m})\overline{f}_m\big).
$$
By using this trick repeatedly, we see that
the map
$s^{d+1,m}_{n,k}\circ s^{d,m}_{n,\K}(f)$ is homotopic to the map
$
\big((z^2+1)\overline{f}_1,(z^2+1)\overline{f}_2,
(z^2+1)\overline{f}_3,\cdots ,(z^2+1)\overline{f}_m\big)$,
which is also homotopic to the map
$f^{d,m}_{n,\K}(f)$.
This completes the proof.
\end{proof}

\begin{corollary}\label{crl: colimit}
Let $\epsilon\in \{0,1\}$.
Then the space $\Q^{\infty,m}_n(\K)$ is homotopy equivalent to the colimit
constructed from the stabilization maps $f^{d,m}_{n,\K}$,
$$
\Q^{1+\epsilon,m}_{n}(\K)
\stackrel{f^{1+\epsilon,m}_{n,\K}}{\longrightarrow}
\Q^{3+\epsilon,m}_{n}(\K)\stackrel{f^{3+\epsilon,m}_{n,\K}}{\longrightarrow}
\Q^{5+\epsilon,m}_{n}(\K)
\stackrel{f^{5+\epsilon,m}_{n,\K}}{\longrightarrow}
\Q^{7+\epsilon,m}_{n}(\K)\stackrel{f^{7+\epsilon,m}_{n,\K}}{\longrightarrow}
\cdots
\cdots
$$
\end{corollary}
\begin{proof}
This follows from the definition of $\Q^{\infty,m}_n(\K)$ and Lemma
\ref{lemma: stabilization map f}.
\end{proof}
Let $\textit{\textbf{x}}_d=(x_{d,1},\cdots ,x_{d,m})\in D_{d,\K}$ and
consider the stabilization map $s^{d,m}_{n,\K}$ given by
(\ref{eq: stab-infty}).
Note that
this map  clearly extends to a map
$\overline{s}^{d,m}_{n,\K}:\P^d(\K)^m\to\P^{d+1}(\K)^m$ 
by the same formula 
and
its restriction gives a stabilization map
$\tilde{s}^{d,m}_{n,\K}:\Sigma^{d,m}_{n,\K}\to \Sigma^{d+1,m}_{n,\K}$
between discriminants.
If we choose a sufficiently small positive number $\epsilon_0 >0$,
then the following condition is satisfied:
\begin{equation*}
O(x_{d,i})\cap O(x_{d,j})=\emptyset
\mbox{ if }i\not=j,
\mbox{ where }O(x)=\{\alpha \in \K:
\vert x-\alpha \vert <\epsilon_0\}
\mbox{ for }x\in \K.
\end{equation*}
For each element
$\textit{\textbf{y}}=(y_1,\cdots ,y_m)\in 
O(x_{d,1})\times \cdots \times
O(x_{d,m})$, if we consider the map
$$
\begin{CD}
\Sigma^{d,m}_{n,\K} @>>>\Sigma^{d+1,m}_{n,\K}
\\
(f_1(z),\cdots ,f_m(z)) @>>>
((z-y_1)\tilde{\varphi}_d(f_1(z)),\cdots ,(z-y_m)\tilde{\varphi}_d(f_m(z)))
\end{CD}
$$
then it is easy to see that this map coincides with
the map $\tilde{s}^{d,m}_{n,\K}$
if $\textit{\textbf{y}}=\textit{\textbf{x}}_d$.
Thus, by using a homeomorphism $O(x_{d,k})\cong \K$
for $1\leq k\leq m$, we see that
the map
$\tilde{s}^{d,m}_{n,\K}:\Sigma^{d,m}_{n,\K}\to \Sigma^{d+1,m}_{n,\K}$
 extends to an open embedding
\begin{equation}\label{equ: open-stab}
\tilde{s}^{d,m}_{n,\K}:\Sigma^{d,m}_{n,\K}\times\K^m\to 
\Sigma^{d+1,m}_{n,\K}.
\end{equation}
Since one-point compactification is contravariant for open embeddings,
it
induces a map
\begin{equation}\label{equ: embedding3}
\tilde{s}^{d,m}_{n,\K+}: 
(\Sigma^{d+1,m}_{n,\K})_+
\to
(\Sigma^{d,m}_{n,\K}\times \K^{m})_+=
(\Sigma^{d,m}_{n,\K})_+\wedge S^{d(\K)m}
\end{equation}
between one-point compactifications 
and
we obtain  the following  commutative diagram
\begin{equation}\label{diagram: discriminant}
\begin{CD}
\tilde{H}_k(\Q^{d,m}_{n}(\K);\Z) 
@>{s^{d,m}_{n,\K*}}>>\tilde{H}_k(\Q^{d+1,m}_{n}(\K);\Z)
\\
@V{Al}V{\cong}V @V{Al}V{\cong}V
\\
H^{d(\K)dm-k-1}_c(\Sigma^{d,m}_{n,\K};\Z) 
@>\tilde{s}^{d,m*}_{n+}>>
H^{d(\K)(d+1)m-k-1}_c(\Sigma^{d+1,m}_{n,\K};\Z)
\end{CD}
\end{equation}
where $Al$ denotes the Alexander duality isomorphism and
 $\tilde{s}^{d,m*}_{n+}$ the composite of the
the suspension isomorphism with the homomorphism
${(s^{d,m}_{n,\K})^*}$,
{\small
$$
H^{d(\K)dm-k-1}_c(\Sigma^{d,m}_{n,\K};\Z)
\stackrel{\cong}{\rightarrow}
H^{d(\K)(d+1)m-k-1}_c(\Sigma^{d,m}_{n,\K}\times \K^{m};\Z)
\stackrel{(\tilde{s}^{d,m}_{n,\K})^*}{\longrightarrow}
H^{d(\K)(d+1)m-k-1}_c(\Sigma^{d+1,m}_{n,\K};\Z).
$$
}
Note that the map $\tilde{s}^{d,m}_{n,\K}$  induces the filtration preserving
map 
\begin{equation}
\hat{s}^{d,m}_{n,\K}:\SZ (\K)\times \K^m\to \mathcal{X}^{d+1}(\K)
\end{equation}
and
it induces the homomorphism of spectral sequences
\begin{equation}\label{equ: theta1}
\{ \theta_{k,s}^t:E^{t;d}_{k,s}\to E^{t;d+1}_{k,s}\}.
\end{equation}
\begin{lemma}\label{lmm: E1}
If $d(\K)mn\geq 4$ and
$1\leq k\leq \lfloor \frac{d}{n}\rfloor$, 
$\theta^{\infty}_{k,s}:
E^{\infty;d}_{k,s}\stackrel{\cong}{\rightarrow} 
{E}^{\infty;d+1}_{k,s}$ is
an isomorphism for any $s$.
\end{lemma}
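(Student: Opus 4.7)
My approach is first to reduce the $E^{\infty}$-statement to an $E^1$-statement via collapse of the spectral sequence, and then to analyze the stabilization map concretely on each stratum using the affine bundle description of Lemma \ref{lemma: vector bundle}.

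The first step is to observe that under the hypothesis $d(\K)mn \geq 4$, the spectral sequence (\ref{SS}) collapses at $E^1$. Indeed, by Lemma \ref{lemma: E1} the nonzero entries of $E^{1;d}_{*,*}$ sit only at $(0,0)$ and at the points $(k, (d(\K)mn-1)k)$ for $1 \leq k \leq \lfloor d/n \rfloor$. For a differential $d^t: E^t_{k,s} \to E^t_{k+t, s+t-1}$ to connect two such entries, one is forced into the equation $-1 = (d(\K)mn-2)t$, which has no solution for $t \geq 1$ when $d(\K)mn \geq 3$. This is the collapse argument already used in the proof of Corollary \ref{crl: homology}, and it applies equally to the spectral sequence for $d+1$. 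Consequently $E^{\infty;d}_{k,s} = E^{1;d}_{k,s}$ and $E^{\infty;d+1}_{k,s} = E^{1;d+1}_{k,s}$, so it suffices to show that $\theta^1_{k,s}$ is an isomorphism. Off the diagonal $s = (d(\K)mn-1)k$ both groups vanish, and the only case requiring treatment is the diagonal, where both sides are copies of $\Z$.

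For the diagonal case, I would analyze the restriction of the filtration-preserving map $\hat{s}^{d,m}_{n,\K}: \SZ(\K) \times \K^m \to \mathcal{X}^{d+1}(\K)$ to the $k$-th stratum. The set of common real roots of multiplicity $n$ is unaffected by stabilization at infinity, so the restricted map covers the identity on the base $C_k(\R)$. Both source and target are total spaces of real affine bundles over $C_k(\R)$ of equal rank $l_{d,k} + d(\K)m = l_{d+1,k}$. A direct Vandermonde-style verification, analogous to the one carried out in the proof of Lemma \ref{lemma: vector bundle}, shows that the restriction is a fiberwise affine open embedding; since an affine injection between affine spaces of equal dimension is automatically a bijection, the map is in fact a fiberwise affine isomorphism covering the identity on $C_k(\R)$.

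Passing to one-point compactifications, this restricted map becomes a homeomorphism on the $k$-th subquotient, and hence an isomorphism on $\tilde{H}^*_c$. Combining with the naturality of the Thom isomorphism over the contractible base $C_k(\R) \cong \R^k$, I conclude that $\theta^1_{k,(d(\K)mn-1)k}$ is an isomorphism between copies of $\Z$, as required. The main obstacle in this plan is the Vandermonde-style verification in the previous paragraph: one must track the explicit affine-subspace description of the fibers of $\xi_{d,k}$ and $\xi_{d+1,k}$ from the proof of Lemma \ref{lemma: vector bundle} and confirm that the $d(\K)m$-dimensional gap between them is precisely filled by the $\K^m$-factor introduced by the stabilization map.
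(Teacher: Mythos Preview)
Your proof is correct and follows essentially the same route as the paper's: establish the $E^1$-collapse from Lemma \ref{lemma: E1}, and show that $\theta^1_{k,s}$ is an isomorphism by observing that the stratum-level stabilization covers the identity on $C_k(\R)$. The paper records only the commutative diagram over $C_k(\R)$ and does not carry out your flagged Vandermonde-style verification; in fact that step is stronger than necessary, since $\hat{s}^{d,m}_{n;k}$ inherits from the open embedding (\ref{equ: open-stab}) the structure of an open embedding between Euclidean spaces of the same dimension, and such a map already induces an isomorphism on $H^*_c$.
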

\begin{proof}
Since we use the result of Lemma \ref{lemma: vector bundle}
and $\SZ_k=\SZ_{k-1}$ if $k>\lfloor\frac{d}{n}\rfloor$,
suppose that $1\leq k\leq \lfloor \frac{d}{n}\rfloor$.
If we set 
$\hat{s}^{d,m}_{n;k}=\hat{s}^{d,m}_{n,\K}\vert
\SZ_k\setminus\SZ_{k-1},$ it follows from the construction of
the map $\hat{s}^{d,m}_{n,\K}$ that
the following diagram is commutative:
\begin{equation*}
\begin{CD}
(\SZ_k\setminus\SZ_{k-1})\times \K^{m} @>\pi_{k,d}>> C_{k}(\R)
\\
@V{\hat{s}^{d,m}_{n;k}}VV \Vert @.
\\
\mathcal{X}^{d+1}_k\setminus 
\mathcal{X}^{d+1}_{k-1} 
@>\pi_{k,d+1}>> C_{k}(\R)
\end{CD}
\end{equation*}
Since one-point compactification is contravariant for open embeddings,
the map $\hat{s}^{d,m}_{n;k}$ induces the map
$
(\hat{s}^{d,m}_{n;k})_+:
(\mathcal{X}^{d+1}_k\setminus 
\mathcal{X}^{d+1}_{k-1})_+\to 
((\SZ_k\setminus\SZ_{k-1})\times \K^{m})_+=
(\SZ_k\setminus\SZ_{k-1})_+\wedge
S^{d(\K)m} 
$
between one-point compactfications.
Recall from the proof of Lemma \ref{lemma: vector bundle} that
 $\xi_{d,k}$ (resp. $\xi_{d+1,k}$) is a trivial real affine bundle over $C_k(\R)$
 with rank $l_{d,k}$ (resp. $l_{d+1,k}$).
 Moreover, note that
\begin{align*}
(d(\K)m(d+1)+k-s-1)-l_{d,k}-d(\K)m&=
(d(\K)m(d+1)+k-s-1)-l_{d+1,k}
\\
&=d(\K)mnk-s.
\end{align*}
Then from the above commutative diagram and the suspension isomorphism,
we obtain the following commutative diagram:
{\small
$$
\begin{CD}
E^{1;d}_{k,s} @>\theta^1_{k,s}>> E^{1;d+1}_{k,s}
\\
\Vert @. \Vert @.
\\
H^{d(\K)md+k-s-1}_c
(\SZ_k\setminus\SZ_{k-1};\Z)
 @.
H^{d(\K)m(d+1)+k-s-1}_c(
\mathcal{X}^{d+1}_k\setminus 
\mathcal{X}^{d+1}_{k-1};\Z)
\\
@V{sus}V{\cong}V \Vert @.
\\
H^{d(\K)m(d+1)+k-s-1}_c
((\mathcal{X}^{d}_k\setminus 
\mathcal{X}^{d}_{k-1})\times \K^m;\Z) 
@>(\hat{s}^{d,m}_{n;k})_+^*>>
H^{d(\K)m(d+1)+k-s-1}_c(
\mathcal{X}^{d+1}_k\setminus 
\mathcal{X}^{d+1}_{k-1};\Z)
\\
@V{}V{\cong}V @V{}V{\cong}V
\\
H^{d(\K)mnk-s}_c(C_k(\R);\Z) @>>=> 
H^{d(\K)mnk-s}_c(C_k(\R);\Z)
\end{CD}
$$
}
\par
Hence, $\theta^1_{k,s}$ is an isomorphism for any $s$.
Since the degree of the differential $d^t$ is $(t,t-1)$, 
it follows from Lemma \ref{lemma: E1} and dimensional reasons
that the spectral sequence collapses at $E^1$-term.
Hence,
$E^{1;d+\epsilon}_{k,*}=E^{\infty;d+\epsilon}_{k,*}$ 
for $\epsilon\in\{0,1\}$ and the assertion follows.
This completes the proof of Lemma \ref{lmm: E1}.
\end{proof}

Now we prove the key result.
\begin{theorem}\label{thm: stab1}
If $d(\K)mn\geq 4$,
the stabilization map 
$$
s^{d,m}_{n,\K}:\Q^{d,m}_{n}(\K)\to
\Q^{d+1,m}_{n}(\K)
$$ 
is a homology equivalence for 
$\lfloor \frac{d}{n}\rfloor =\lfloor\frac{d+1}{n}\rfloor$, and it is a
homology equivalence through dimension
$D(d;m,n,\K)$ for
$\lfloor \frac{d}{n}\rfloor <\lfloor\frac{d+1}{n}\rfloor$.
\end{theorem}
\begin{proof}
First, consider the case 
$\lfloor \frac{d}{n}\rfloor =\lfloor\frac{d+1}{n}\rfloor$.
In this case, by using Lemma \ref{lemma: E1} and 
Lemma \ref{lmm: E1} we easily see that
$\theta^{\infty}_{k,s}:E^{\infty;d}_{k,s}\stackrel{\cong}{\longrightarrow}
E^{\infty;d+1}_{k,s}$ 
is an isomorphism for any $(k,s)$.
Since $\theta^t_{k,s}$ is induced from $\hat{s}^{d,m}_{n,\K}$,
it follows from (\ref{diagram: discriminant}) that
the map $s^{d,m}_{n,\K}$ is a homology equivalence.
\par
Next assume that
$\lfloor \frac{d}{n}\rfloor <\lfloor\frac{d+1}{n}\rfloor$,
i.e. $\lfloor \frac{d+1}{n}\rfloor =\lfloor\frac{d}{n}\rfloor +1.$
In this case,
by considering the differential
$d^t:E^{t;d+\epsilon}_{k,s}\to E^{t;d+\epsilon}_{k+t,s+t-1}$
($\epsilon \in \{0,1\})$, Lemma \ref{lmm: E1}
and Lemma \ref{lemma: E1}, we easily see that
$\theta^{\infty}_{k,s}:E^{\infty;d}_{k,s}\to E^{\infty;d+1}_{k,s}$ 
is an isomorphism for any $(k,s)$ as long as
the condition $s-k\leq D(d;m,n,\K)$ is satisfied.
Hence, 
the map $s^{d,m}_{n,\K}$ is a homology equivalence
through dimension $D(d;m,n,\K)$.
\end{proof}

\begin{definition}
(i)
For an $m$-tuple $D=(d_1,\cdots ,d_m)\in \N^m$ of positive integers,
let
$\Q^{D;m}_n(\K)=\Q^{d_1,\cdots ,d_m;m}_n(\K)$ denote the space of all
$m$-tuples $(f_1(z),\cdots ,f_m(z))\in \P^{d_1}(\K)\times
\cdots \times \P^{d_m}(\K)$
such that the
polynomials $\{f_k(z)\}_{k=1}^m$ have no common {\it real}
root of multiplicity $\geq n$
(but they may have a common {\it complex} root of any multiplicity).
Note that
\begin{equation}
\Q^{D;m}_n(\K)
=
\Q^{d_1,\cdots ,d_m;m}_n(\K)
=
\Q^{d,m}_n(\K)
\qquad
\mbox{ if }
d=d_1=d_2=\cdots =d_m.
\end{equation}
\par
(ii)
Let us choose any fixed real number $x_0\in (d,d+1)$.
We
define the $i$-th stabilization map
$s^{D;i}_{n,\K}:
\Q^{d_1,\cdots ,d_m;m}_n(\K)\to \Q^{d_1,\cdots ,d_{i-1},d_i+1,d_{i+1},\cdots ,d_m;m}_n(\K)$
by
\begin{equation}\label{eq: stab-infty2}
s^{D;i}_{n,\K}(f)=
(\overline{f}_1(z),\cdots,\overline{f}_{i-1}(z),
(z-x_0)\overline{f}_{i}(z),
\overline{f}_{i+1}(z),
\cdots
,\overline{f}_{m}(z))
\end{equation}
for $f=(f_1(z),\cdots ,f_m(z))\in \Q^{D;m}_n(\K)$,
where
$\overline{f}_k(z)=\tilde{\varphi}_d(f_k(z))$
for $1\leq k\leq m$.
\qed
\end{definition}
\begin{theorem}\label{thm: stab1ity2}
If $d(\K)mn\geq 4$,
the stabilization map 
$$
s^{D;i}_{n,\K}:
\Q^{d_1,\cdots ,d_m;m}_{n}(\K)\to
\Q^{d_1,\cdots ,d_{i-1},d_i+1,d_{i+1},\cdots ,d_m;m}_{n}(\K)
$$ 
is a homology equivalence for 
$\lfloor \frac{d_i}{n}\rfloor =\lfloor\frac{d_i+1}{n}\rfloor$, and it is a
homology equivalence through dimension
$D(d_i;m,n,\K)$ for
$\lfloor \frac{d_i}{n}\rfloor <\lfloor\frac{d_i+1}{n}\rfloor$.
\end{theorem}
\begin{proof}
The proof is completely analogous to that of Theorem \ref{thm: stab1}.
So we omit the details. 
\end{proof}
\begin{lemma}\label{lmm: abelian}
If $d(\K)mn\geq 4$, 
$\pi_1(\Q^{d_1,\cdots ,d_m;m}_n(\K))$
is an abelian group.
\end{lemma}
\begin{proof}
We can prove the assertion in the same way as in
Lemma \ref{lmm: 1-connected}
by using the string representation. 
\end{proof}

\section{Configuration spaces and horizontal scanning maps}
\label{section: scanning maps}

In this section, we prove the stable result (see Theorem \ref{thm: natural map} below)
by using  \lq\lq horizontal scanning maps\rq\rq.
We continue to assume that $\K =\R$ or $\C$.

\begin{definition}
(i)
For a space $X$ let $\SP^d(X)$ denote the $d$-th {\it symmetric product}
defined by the quotient space 
\begin{equation}
\SP^d(X)=X^d/S_d,
\end{equation}
where
the symmetric group $S_d$ of $d$-letters acts on $X^d$ by the
permutation of coordinates.
Since $F(X,d)$ is a $S_d$-invariant subspace of $X^d$ and
$C_d(X)=F(X,d)/S_d$, there is a natural inclusion
$C_d(X)\subset \SP^d(X).$
\par
(ii)
It is easy to see that
an element $\alpha\in\SP^d(X)$ may be identified with
the formal linear combination
\begin{equation}\label{combination}
\alpha =\sum_{i=1}^kd_ix_i,
\quad\quad
\mbox{where }\{x_i\}_{i=1}^k\in C_k(X)\ \  \mbox{and }\  \sum_{i=1}^kd_i=d.
\end{equation}
We shall refer to $\alpha$
as {\it a configuration} (or {\it divisor}) 
having 
{\it a multiplicity}
$d_i$ at the point $x_i$.
\par
(iii)
Note that there is a natural homeomorphism
\begin{equation}\label{eq: varphi}
\Phi_d:\P^d(\C)\stackrel{\cong}{\longrightarrow}\SP^d(\C)
\quad
\mbox{given by}\quad
\Phi_d(\prod_{i=1}^k(z-\alpha_i)^{d_i})= \sum_{i=1}^{k}d_i\alpha_i.
\end{equation}
Since there is a natural inclusion
$\P^d(\R)\subset \P^d(\C)$,
we define a subspace
$\SP^d_{\R}\subset \SP^d(\C)$ as the image
\begin{equation}
\SP^d_{\R}=\Phi_d(\P^d(\R)).
\end{equation}
Note that restriction gives a homeomorphism
\begin{equation}
\Phi_d\vert \P^d(\R):\P^d(\R)
\stackrel{\cong}{\longrightarrow}
\SP^d_{\R}.
\end{equation}
\par
(iv)
For a subspace $X\subset \C$, let
$\SP^d_{\R}(X)\subset \SP^d(X)$ denote the subspace
of $\SP^d(X)$
given by
\begin{equation}
\SP^d_{\R}(X)=\SP^d_{\R}\cap\SP^d(X).
\end{equation}
\par
(v)
Similarly, for an $m$-tuple $D=(d_1,\cdots ,d_m)\in \N^m$
of positive integers,
define subspaces 
$Q^{D;m}_{n,\R}(X)\subset Q^{D;m}_{n,\C}(X)$ in
$\SP^{d_1}(X)\times \cdots \times \SP^{d_m}(X)$ by
\begin{equation}\label{equ: Q(X)}
\begin{cases}
Q^{D;m}_{n,\C}(X)&=
Q^{d_1,\cdots ,d_m;m}_{n,\C}(X)
\\
&=\big\{(\xi_1,\cdots ,\xi_m)\in\SP^{d_1}(X)\times \cdots
\times \SP^{d_m}(X):
(*)_n\big\},
\\
Q^{D;m}_{n,\R}(X)
&=
Q^{d_1,\cdots ,d_m;m}_{n,\R}(X)
=Q^{D;m}_{n,\C}(X)\cap (\SP^{d_1}_{\R}(X)\times \cdots \times
(\SP^{d_m}_{\R}(X)),
\end{cases} 
\end{equation}
where the condition $(*)_n$ is given by
\begin{enumerate}
\item[$(*)_n$]
\qquad
The element $(\cap_{k=1}^m\xi_k)\cap \R$ contains no point of
multiplicity $\geq n$.
\end{enumerate}
When $d=d_1=\cdots =d_m$ and $D=(d_1,\cdots ,d_m)$,
we set
$Q^{d,m}_{n,\K}(X)=Q^{D;m}_{n,\K}(X)$.
\qed
\end{definition}
\begin{remark}\label{rmk: Q}
(i)
For an open set $V\subset \C$
and an $m$-tuple $D=(d_1,\cdots ,d_m)\in \N^m$,
 let $\Q^{D;m}_{n}(V;\K)$ denote the subspace
of $\Q^{D;m}_n(\K)$ consisting of all $m$-tuples
$(f_1(z),\cdots ,f_m(z))\in \Q^{D;m}_n(\K)$ such that all roots of $f_k(z)$ lie
in $V$ for each $1\leq k\leq m$.
Then it is easy to see that there is a homeomorphism 
$\Q^{D;m}_{n}(V;\K)\cong Q^{D;m}_{n,\K}(V)=Q^{d_1,\cdots ,d_m;m}_{n,\K}(V)$ given by
\begin{align}\label{eq: QK}
\big(\prod_{k=1}^{s_1}(z-\alpha_{k}^{(1)})^{d_{k}^{(1)}},\cdots 
,\prod_{k=1}^{s_1}(z-\alpha_{k}^{(m)})^{d_{k}^{(m)}}\big)
\mapsto
\big(\sum_{k=1}^{s_1}d_{k}^{(1)}\alpha_{k}^{(1)},\cdots ,
\sum_{k=1}^{s_m}d_{k}^{(m)}\alpha_{k}^{(m)}\big),
\end{align}
where $\alpha_k^{(i)}\in V$ 
and $\sum_{k=1}^{s_i}d_k^{(i)}=d_i$
for $1\leq k\leq s_i$ and $1\leq i\leq m$.
\par
(ii)
Let $X\subset \C$ be a subspace.
If we use the notation (\ref{combination}), an element
$\alpha \in \SP^d_{\R}(X)$
can be represented as the formal linear combination
\begin{equation}
\alpha =\sum_{i=1}^sd_ix_i +
\sum_{j=1}^kd_j^{\prime}(\alpha_j+\overline{\alpha}_j),
\end{equation}
where 
$\{x_i\}_{i=1}^s\in C_s(\R)$, 
$\{\alpha_j\}_{j=1}^k\in C_k(\mathbb{H} \cap (X\setminus \R))$,
$\sum_{i=1}^sd_i+2\sum_{j=1}^kd_j^{\prime}=d$
and
$\mathbb{H}=\{\alpha \in \C:\mbox{Im }(\alpha)>0\}.$
\newline
If $X\cap \R=\emptyset$ and
$D=(d_1,\cdots ,d_m)\in \N^m$,
$Q^{D;m}_{n,\K}(X)=\prod_{k=1}^m\SP^{d_k}_{\K}(X)$, where
we set $\SP^d_{\K}=\SP^d(X)$ if $\K =\C$.
\qed
\end{remark}

\begin{definition}
Let $X\subset \C$ and  
$\emptyset\not= A\subset X$ be a closed subspace.
\par
(i) Let 
$\SP^d(X,A)$ denote the quotient space
$\SP^d(X)/\sim ,$ 
where
the equivalence relation \lq\lq$\sim$\rq\rq \ on $\SP^d(X)$
is given by
\begin{equation}
\xi \sim \eta \quad \mbox{ if }\quad
\xi\cap (X\setminus A)=\eta \cap (X\setminus A)
\quad
\mbox{ for }\xi,\eta\in\SP^d(X).
\end{equation}
Thus, the points in $A$ are ignored.
Note that
there is a natural inclusion
$\SP^d(X,A)\subset \SP^{d+1}(X,A)$ by
adding a point in $A$.
Let
$\SP(X,A)$ denote the space given by the union
\begin{equation}
\SP (X,A)=\bigcup_{d\geq 0}\SP^d(X,A),
\ \mbox{ where we set }\ \SP^0(X,A)=\{\emptyset\}
\ \mbox{ for }d=0.
\end{equation}
\par
(ii)
Similarly,
let $\Q^{d,m}_{n,\K}(X,A)$ be the quotient space of the space
\begin{equation}
\big\{(\xi_1,\cdots ,\xi_m)\in\SP^{d_1}(X)\times \cdots
\times \SP^{d_m}(X):
(*)_{n,A}\big\}
\end{equation}

where the condition $(*)_{n,A}$ is given by
\begin{enumerate}
\item[$(*)_{n,A}$]
\qquad
The element $(\cap_{k=1}^m\xi_k)\cap \R \cap (X\setminus A) $ contains no point of
multiplicity $\geq n$.
\end{enumerate}
and the equivalence relation is given by 
$$
(\xi_1,\cdots ,\xi_m)\sim
(\eta_1,\cdots ,\eta_m)
\quad \mbox{ if}\quad \xi_i\cap (X\setminus A)=\eta_i \cap (X\setminus A)
\ \mbox{ for each }1\leq i\leq m.
$$
Thus, points in $A$ are ignored.
Note that
there is a natural inclusion
\begin{equation}\label{eq: subset}
Q^{d,m}_{n,\K}(X,A)\subset Q^{d+1,m}_{n,\K}(X,A)
\end{equation}
given by adding points in $A$.
Define the space $Q^{m}_{n,\K}(X,A)$ by the union
\begin{equation}\label{eq: union}
Q^{m}_{n,\K}(X,A)=\bigcup_{d\geq 0}Q^{d,m}_{n,\K}(X,A),
\ \ 
\mbox{where we set }\ \ 
Q^{0,m}_{n,\K}(X,A)=\{(\emptyset, \cdots ,\emptyset)\}.
\end{equation}
\end{definition}

\begin{remark}
As sets $Q^{m}_{n,\K}(X,A)$ and 
the disjoint union
$\coprod_{d\geq 0}Q^{d,m}_{n,\K}(X\setminus A)$ are 
bijectively equivalent, but they are not homeomorphic topological spaces.
For example, if $X$ is connected, then $Q^{m}_{n,\K}(X,A)$ is connected while the disjoint union is, in general, disconnected.
\qed
\end{remark}

We need two kinds of horizontal scanning maps.
First, we define the scanning map for the configuration space of
particles.
From now on, we make the identification $\C =\R^2$.

\begin{definition}
For a rectangle $X$ in $\C=\R^2$, let $\sigma X$ denote
the union of the sides of $X$ which are parallel to the $y$-axis, and
for a subspace $Z\subset \C=\R^2$, let $\overline{Z}$ be the closure of $Z$.
From now on, let $I$ denote the interval
$I=[-1,1]$
and
let $0<\epsilon <1$ be a fixed positive real number.
For each $x\in\R$, let $V(x)$ be the set defined by
\begin{equation}\label{eq: Vx}
V(x)=\{w\in\C: \vert \mbox{Re}(w)-x\vert  <\epsilon , \vert\mbox{Im}(w)\vert<1 \}
=(x-\epsilon ,x+\epsilon )\times (-1,1),
\end{equation}
and let us identify $I\times I=I^2$ with the closed unit rectangle
$\{t+s\sqrt{-1}\in \C: -1\leq t,s\leq 1\}$ 
in $\C$.
Now
define {\it the horizontal scanning map}
\begin{equation}
sc^{d,m}_n:\Q^{d,m}_{n}(\K )\to \Omega Q^{m}_{n,\K}(I^2,\partial I\times I)
=\Omega Q^{m}_{n,\K}(I^2,\sigma I^2)
\end{equation}
as follows.
For each $m$-tuple 
$\alpha =(\xi_1,\cdots ,\xi_m)\in Q^{d,m}_{n,\K}(\C)$
of configurations,
let
$
sc^{d,m}_n(\alpha):\R \to Q^{d,m}_{n,\K}(I^2,\partial I\times I)
=Q^{d,m}_{n,\K}(I^2,\sigma I^2)$
denote the map given by
\begin{align*}
\R\ni x
&\mapsto
(\xi_1\cap\overline{V}(x),\cdots ,\xi_m\cap\overline{V}(x))
\in
Q^{m}_{n,\K}(\overline{V}(x),\sigma \overline{V}(x))
\cong Q^{m}_{n,\K}(I^2,\sigma I^2),
\quad
\end{align*}
where 
we use the canonical identification
$(\overline{V}(x),\sigma \overline{V}(x))
\cong (I^2,\sigma I^2)=
(I^2,\partial I\times I).$
\par\vspace{2mm}\par
Since $\dis\lim_{x\to\pm \infty}sc^{d,m}_n(\alpha)(x)
=(\emptyset ,\cdots ,\emptyset)$, 
by setting $sc^{d,m}_n(\alpha)(\infty)=(\emptyset ,\cdots ,\emptyset)$
we obtain a based map
$sc^{d,m}_n(\alpha)\in \Omega Q^{m}_{n,\K}(I^2,\sigma I^2),$
where we identify $S^1=\R \cup \infty$ and
we choose the empty configuration
$(\emptyset ,\cdots ,\emptyset)$ as the base point of
$\Q^{m}_{n,\K}(I^2,\sigma I^2)$.
\par
Hence, if we identify $\Q^{d,m}_n(\K)=
Q^{d,m}_{n,\K}(\C)$,
 we obtain a map
$sc^{d,m}_n:
\Q^{d,m}_{n}(\K)\to 
\Omega Q^{m}_{n,\K}(I^2,\sigma I^2).
$
\par\vspace{1mm}\par
Since  $sc^{d+1,m}_{n,\K}\circ s^{d,m}_{n,\K}\simeq sc^{d,m}_{n,\K}$,
by setting $\dis S=\lim_{d\to\infty}sc^{d,m}_n$
we obtain {\it the stable horizontal scanning map}
\begin{equation}
S:
\Q^{\infty,m}_{n}(\K)
=\lim_{d\to\infty}\Q^{d,m}_{n}(\K)
\to 
\Omega
Q^{m}_{n,\K}(I^2,\partial I\times I)
=\Omega Q^m_{n,\K}(I^2,\sigma I^2).
\end{equation}
\end{definition}

\begin{theorem}[\cite{Gu}, \cite{Se}]\label{thm: scanning map}
If $d(\K)mn\geq 4$, the stable horizontal scanning map
$$
S:
\Q^{\infty,m}_{n}(\K)
\stackrel{\simeq}{\longrightarrow}
\Omega
Q^{m}_{n,\K}(I^2,\sigma I^2)
$$
is a homotopy equivalence.
\end{theorem}
\begin{remark}
Recall that
G. Segal proves that the scanning map
$S_G:\widehat{Q}
\stackrel{\simeq}{\longrightarrow}
\Omega^2(\CP^{\infty}\vee\CP^{\infty})$
is a homotopy equivalence in
\cite[\S 2]{Se}.
His proof consists of three steps.
First, he proves that there is a homotopy equivalence
$Q(S^2,\infty)\simeq \CP^{\infty}\vee \CP^{\infty}$ 
\cite[Prop. 3.1]{Se}, and secondly he shows that
{\it the horizontal scanning map}
$S_H:\widehat{Q}\stackrel{\simeq}{\longrightarrow}
\Omega Q(I^2,\sigma I^2)$ is a homotopy equivalence by using
\cite[Lemma 3.4]{Se}.
Finally, he proves that {\it the vertical scanning map} 
$S_V:Q(I^2,\sigma I^2)\stackrel{\simeq}{\longrightarrow}\Omega Q(S^2,\infty)$
is a homotopy equivalence in \cite[Prop. 3.2]{Se}.
Since Segal's scanning map $S_G$ is the composite
$\Omega S_V\circ S_H$ (up to homotopy equivalence), he concludes that it is a homotopy equivalence.
By using his method we will show that the map $S$ is a homotopy equivalence. 
\qed
\end{remark}
\begin{proof}
We identify $\C=\R^2$ by means of the identification
$x+\sqrt{-1}y\mapsto (x,y)$ in the usual way.
The proof is  analogous to the one given in
\cite[Prop. 3.2, Lemma 3.4]{Se} and \cite[Prop. 2]{Gu}.
Let $B$ and $B^*$ denote the rectangles in $\R^2=\C$ given by
$
B^*= [-1,2]\times [-1,1]$ and
$B = (0,1)\times (-1,1)$. 
Let $\{V_t:0<t<1\}$ be a family of open rectangles in $B$ given by
$V_t=(t-\epsilon (t),t+\epsilon (t))\times (-1,1)$, where
$\epsilon (t)$ denotes the continuous function defined on the open interval $(0,1)$
such that
$0<\epsilon (t)<\min \{t,1-t\}$
for any $t\in (0,1)$ with 
$\dis \lim_{t\to + 0}\epsilon (t)=\lim_{t\to 1-0}\epsilon (t)=0.$
\par
Let
$
\widetilde{sc}^{d,H}:
Q^{d,m}_{n,\K}(B)\times [0,1]\to Q^m_{n.\K}(\overline{B},\sigma \overline{B})
$
denote the map given by
$$
\widetilde{sc}^{d,H}((\xi_1,\cdots ,\xi_m),t)
=
(\xi_1\cap V_t,\cdots ,\xi_m\cap V_t)\in 
Q^{m}_{n,\K}(\overline{V_t},\sigma\overline{V_t})
\cong Q^m_{n.\K}(\overline{B},\sigma \overline{B}),
$$ 
where
we use the canonical identification
$
Q^{m}_{n,\K}(\overline{V_t},\sigma\overline{V_t})
\cong Q^m_{n,\K}(\overline{B},\sigma \overline{B}).$
Since $\dis \lim_{t\to + 0}\widetilde{sc}^{d,H}(\xi ,t)
=\lim_{t\to 1-0}\widetilde{sc}^{d,H}(\xi ,t)
=(\emptyset ,\cdots ,\emptyset)$ 
for any $\xi \in Q^{d,m}_{n,\K}(B)$,
the adjoint of $\widetilde{sc}^{d,H}$ defines the map
$sc^{d,H}:Q^{d,m}_{n,\K}(B)\to \Omega Q^m_{n,\K}(\overline{B},\sigma \overline{B})$.
If $s^{d}:Q^{d,m}_{n,\K}(B)\to Q^{d+1,m}_{n,\K}(B)$
denotes the stabilization map
defined by adding points from infinity as in Definition
\ref{def: stabilization},
by using the identification (\ref{eq: QK}) we obtain a homotopy commutative
diagram
$$
\begin{CD}
\Q^{d,m}_n(\K) @>s^{d,m}_n>> \Q^{d+1,m}_{n}(\K)
\\
@V{\cong}VV @V{\cong}VV
\\
Q^{d,m}_{n,\K}(B) @>s^{d}>> Q^{d+1,m}_{n,\K}(B)
\end{CD}
$$
Hence, if $Q^{\infty,m}_{n,\K}(B)$ denotes the colimit
$\dis Q^{\infty,m}_{n,\K}(B)=\lim_{d\to\infty}Q^{d,m}_{n,\K}(B)$
taken over the stabilization maps $s^{d}$, 
there is a commutative diagram
$$
\begin{CD}
\Q^{\infty ,m}_{n}(\K) @>S>>  \Omega Q^m_{n,\K}(I^2,\sigma I^2)
\\
@V{\cong}VV @V{\cong}VV
\\
Q^{\infty,m}_{n,\K}(B) @>S^{\p}>> \Omega Q^m_{n,\K}(\overline{B},\sigma \overline{B})
\end{CD}
$$
where we set $\dis S^{\p}=\lim_{d\to\infty}sc^{d,H}.$
So it suffices to prove that $S^{\p}$ is a homotopy equivalence.
\par\vspace{2mm}\par
To see this, let $J=(0,1)$ and
let us choose the sequence $\{\textbf{\textit{c}}_d\}_{d=1}^{\infty}$
of $m$-tuples 
$\textit{\textbf{c}}_d=
(c_{d,1},\cdots ,c_{d,m})
\in (J\times J)^m$ 
satisfying the following condition $(*)$:
\begin{enumerate}
\item[$(*)$]
$c_{d_1,i}\not= c_{d_2,j}$ if $(d_1,i)\not=(d_2,j)$,  
and
$\dis \lim_{d\to\infty}c_{d,i}=(1/2,1)\in \partial \overline{B}$
for each $1\leq i\leq m$.
\end{enumerate}
Let $\widehat{Q}^m_{n,\K}(B^*,\sigma B^*)$
denote the space of
$m$-tuples $(\xi_1,\cdots ,\xi_m)$
of
formal infinite divisors in $(B^*,\sigma B^*)$
satisfying the following two conditions:
\begin{enumerate}
\item[$(\dagger)_1$]
The element
$(\bigcap_{k=1}^m\xi_k)\cap \R \cap (B^*\setminus \sigma B^*)$
contains no point of multiplicity $\geq n$,
where
we identify $\R= (-\infty,\infty)\times \{0\}$.
\item[$(\dagger)_2$]
Each divisor $\xi_k$ is represented as the formal infinite sum of the form
$\xi_k=\sum_{d\geq 1}\xi_{k,d}$ 
such that $\xi_{k,d}\in \SP^1 (B^*,\sigma B^*)$ for each $d\geq 1$, and it
almost coincides (except finite sums) with
$\xi_{k}^*=\sum_{d\geq 1}(c_{d,k}+\overline{c_{d,k}})$.
When $\K =\R$,
the equality 
$\xi_k=\overline{\xi}_k$ also holds for each $1\leq k\leq m$, where we set
$\overline{\xi}_k=\sum_{d\geq 1}\overline{\xi_{k,d}}.$
\end{enumerate}
Similarly, let $\widehat{Q}^m_{n,\K}(B)$ denote
denote the space of
$m$-tuples $(\xi_1,\cdots ,\xi_m)$
of
formal infinite divisors in $B$ such that
the element
$(\bigcap_{k=1}^m\xi_k)\cap \R $
contains no point of multiplicity $\geq n$ and satisfies
the condition
$(\dagger)_2$. 
\par
Let us write $X_0=[-1,0]\times [-1,1]$ and $X_1=[1,2]\times [-1,1]$, and
note that $B^*=X_0\cup \overline{B}\cup X_1.$
Then we define
\begin{equation}
\hat{q}:\widehat{Q}^m_{n,\K}(B^*,\sigma B^*)
\to
Q^m_{n,\K}(\overline{B},\sigma \overline{B})^2
\end{equation}
to be the natural quotient map
$$
\widehat{Q}^m_{n,\K}(B^*,\sigma B^*)
\to
Q^m_{n,\K}(B^*,\sigma B^*\cup \overline{B})
\cong 
Q^m_{n,\K}(X_0,\sigma X_0)\times Q^m_{n,\K}(X_1,\sigma X_1)
\cong
Q^m_{n,\K}(\overline{B},\sigma \overline{B})^2.
$$
By using the Dold-Thom criterion 
\cite[Lemma 3.3]{McDuff}
(cf. \cite[Lemma 4K.3]{Hatch}),
one can  show the following:
\begin{lemma}\label{lmm: quasifbration q}
The map
$\hat{q}:\widehat{Q}^m_{n,\K}(B^*,\sigma B^*)\to
Q^m_{n,\K}(\overline{B},\sigma \overline{B})^2$
is a quasifibration with fiber
$\widehat{Q}^m_{n,\K}(B)$.
\end{lemma}
We postpone the proof of Lemma \ref{lmm: quasifbration q}
and complete the proof of Theorem \ref{thm: scanning map}.
\par\vspace{2mm}\par
For each $d\geq 1$, 
let $Q^{d,m}_{n,\K}\subset
Q^m_{n,\K}(B^*,\sigma B^*)$
denote the subspace of all
$m$-tuples
$(\xi_1,\cdots ,\xi_m)\in Q^m_{n,\K}(B^*,\sigma B^*)$ such that
$(\xi_1\cap B,\cdots ,\xi_m\cap B)\in Q^{d,m}_{n,\K}(B),$ 
and let
$q^d:Q^{d,m}_{n,\K}\to
Q^m_{n,\K}(B^*,\sigma B^*\cup \overline{B})\cong
Q^m_{n,\K}(\overline{B},\sigma \overline{B})^2$
be the quotient map.
While the map $q^d$ is not a fiber bundle, one can easily show see that each fiber of $q^d$
is homeomorphic to $Q^{d,m}_{n,\K}(B)$.
In fact, if we stabilized the map $q^d$, it will become a quasifibration.
\par
To see this,
define the map
$f^d:Q^{d,m}_{n,\K}\to Q^{d+2,m}_{n,\K}$ by
$(\xi_1,\cdots ,\xi_m)\mapsto
(\xi_1+c_{d,1}+\overline{c_{d,1}},\cdots ,
\xi_m+c_{d,m}+\overline{c_{d,m}})$, and
we denote by
$Q^{m}_{n,\K}$  the colimit 
$\dis \lim_{k\to\infty}Q^{1+2k,m}_{n,\K}$
taken from the maps $\{f^{1+2k}:k\geq 1\}$.
We also define another horizontal scanning map
$\hat{S}:
Q^{m}_{n,\K}\to 
\Map ([0,1],Q^m_{n,\K}(\overline{B},\sigma\overline{B}))$
by $\hat{S}(\xi_1,\cdots ,\xi_m)(t)=
(\xi_1\cap B_t,\cdots ,\xi_m\cap B_t)$,
where
$B_t$ denotes the open rectangle
$B_t=(2t-1,2t)\times (-1,1)$
and we use the canonical identification
$Q^m_{n,\K}(\overline{B}_t,\sigma \overline{B}_t)
\cong Q^m_{n,\K}(\overline{B},\sigma \overline{B}).$
\par

To see that $\hat{S}$ is a homotopy equivalence, consider the composite 
$$
Q^{m}_{n,\K}\to 
\Map ([0,1],Q^m_{n,\K}(\overline{B},\sigma\overline{B}))\to Q^m_{n,\K}(\overline{B},\sigma\overline{B}), $$
where the first map is $\hat{S}$ and the second evaluation at $\frac{1}{2}$. The composite is obviously a homotopy equivalence  hence so is $\hat{S}$.

Furthermore,
since $q^{d+2}\circ f^d=q^d$,
we obtain the stabilized map
$\dis q^{\infty}=\lim_{k\to\infty}q^{1+2k}:
Q^m_{n,\K}\to Q^m_{n,\K}(\overline{B},\sigma \overline{B})$
such that the following diagram is commutative
$$
\begin{CD}
Q^{m}_{n,\K}
@>q^{\infty}>> 
Q^m_{n,\K}(\overline{B},\sigma \overline{B})^2
\\
@V{\hat{S}}V{\simeq}V @VV{\cong}V
\\
\Map ([0,1],Q^{m}_{n,\K}(\overline{B},\sigma \overline{B}))
@>{res}>> 
\Map (\{0,1\},Q^{m}_{n,\K}(\overline{B},\sigma \overline{B}))
\end{CD}
$$
where $res$ denotes the restriction map.
Now consider the map
$g^d:Q^{d,m}_{n,\K}(B)\to Q^{d+2,m}_{n,\K}(B)$ given by
$(\xi_1,\cdots ,\xi_m)\mapsto
(\xi_1+c_{d,1}+\overline{c_{d,1}},\cdots ,
\xi_m+c_{d,m}+\overline{c_{d,m}})$.
Then in the same way as in Lemma \ref{lemma: stabilization map f}
we see that $Q^{\infty,m}_{n,\K}(B)$ can be identified 
with the colimit $\dis \lim_{k\to\infty}Q^{1+2k,m}_{n,\K}(B)$ taken from the maps
$\{g^{1+2k}:k\geq 1\}$.
Note that the space $\widehat{Q}^m_{n,\K}(B^*,\sigma B^*)$ has $\Z^m$-path components. An element
$\eta =(\xi_1,\cdots ,\xi_m)\in \widehat{Q}^m_{n,\K}(B^*,\sigma B^*)$
belongs to the $(n_1,\cdots ,n_m)$-th component iff 
$\deg (\xi_k\cap (B^*\setminus \sigma B^*)-\xi_k^*)=n_k$ for each 
$1\leq k\leq m$.
Since we can identify $\widehat{Q}^m_{n,\K}(B^*,\sigma B^*)$ with
$\Z^m\times Q^m_{n,\K}$, we can see that
$\hat{q}\vert Q^m_{n,\K}=q^{\infty}$ and that
$Q^{\infty,m}_{n,\K}=\Z^m\times Q^{\infty,m}_{n,\K}(B).$
Moreover, by using this identification, we see that
$\hat{S}\vert Q^{\infty,m}_{n,\K}(B)=S^{\p}$.
Thus, we finally obtain the quasifibration sequence
$$
Q^{\infty,m}_{n,\K}(B)\longrightarrow
Q^m_{n,\K} \stackrel{q^{\infty}}{\longrightarrow}
Q^m_{n,\K}(\overline{B},\sigma \overline{B})^2
$$
such that the following diagram is commutative:
$$
\begin{CD}
Q^{\infty,m}_{n,\K}(B) 
@>>>
Q^{m}_{n,\K}
@>q^{\infty}>> 
Q^m_{n,\K}(\overline{B},\sigma \overline{B})^2
\\
@V{S^{\p}}VV @V{\hat{S}}V{\simeq}V @VV{\cong}V
\\
\Omega Q^m_{n,\K}(\overline{B},\sigma \overline{B})
@>>>
\Map ([0,1],Q^{m}_{n,\K}(\overline{B},\sigma \overline{B}))
@>{res}>> 
\Map (\{0,1\},Q^{m}_{n,\K}(\overline{B},\sigma \overline{B}))
\end{CD}
$$
where
 the lower sequence is a fibration sequence.
Thus $S^{\p}$ is a homotopy equivalence.
\end{proof}
%
\begin{proof}[Proof of Lemma \ref{lmm: quasifbration q}]
Let us consider the family of closed subspaces
$\{Q_{d_1,\cdots ,d_m}:d_j\geq 1 \mbox{ for } 1\leq j\leq m\}$
of the base space $Q^m_{n,\K}(B^*,\sigma B^*\cup \overline{B})$
defined by
$$
Q_{d_1,\cdots ,d_m}=\{(\xi_1,\cdots ,\xi_m)\in 
Q^m_{n,\K}(B^*,\sigma B^*\cup \overline{B}):
\deg (\xi_j\cap (B^*\setminus B^{\p})\leq d_j
\mbox{ for each }j\},
$$
where we write $B^{\p}:=\sigma B^*\cup \overline{B}.$
\par
Suppose that $\hat{q}$ is a quasifibration over the subspace
$Q^{\p}=
Q_{d_1,\cdots ,d_m}.$
Using the Dold-Thom criterion
\cite[Lemma 3.3]{McDuff}
(cf. \cite[Lemma 4K.3]{Hatch}),
it suffices show that $\hat{q}$ is a quasifibration over
$Q=Q_{d_1,\cdots ,d_{k-1},d_k+2,d_{k+1},\cdots ,d_m}$.
If we write $Q^{\p\p}=Q\setminus Q^{\p}$, it is easy to see that
$\hat{q}$ is a trivial bundle over $Q^{\p\p}$.
Let $U$ be an open neighborhood of $\hat{q}^{-1}(Q^{\p})$ in
$\hat{q}^{-1}(Q)$ given by $U=\hat{q}^{-1}(Q^{\p})\cup V$, where 
$0<\delta<10^{-10}$ denotes any fixed small number
with
$I_{\delta}=\big((-\delta,0)\cup (1+\delta)\big)\times (0,1),$
and $V$ denotes the open subspace of $\hat{q}^{-1}(Q^{\p\p})$
consisting of all $m$-tuples
$(\xi_1,\cdots ,\xi_{k-1},\xi_k+x_0+\overline{x_0},
\xi_{k+1},\cdots ,\xi_m)$ with $x_0\in I_{\delta}$
and $(\xi_1,\cdots ,\xi_m)\in \hat{q}^{-1}(Q^{\p})$, such that
$\min\{|x_0-x|,|\overline{x_0}-x|:x\in \xi_i
\mbox{ for }1\leq i\leq m\}\geq 100\delta$.
By moving the points $x_0$ and $\overline{x_0}$ continuously to a point of $\sigma B$
by suitable "gravitational" force, we obtain the homotopies
$
H_t:\hat{q}^{-1}(Q)\to \hat{q}^{-1}(Q)
$ 
and
$
h_t:Q\to Q$ 
such that
$H_0=\mbox{id}$, $h_0=\mbox{id}$, 
$H_1(\hat{q}^{-1}(Q))\subset \hat{q}^{-1}(Q^{\p})$
and
$\hat{q}\circ H_t=h_t\circ \hat{q}$
for each $0\leq t\leq 1$. 
\par
It remains to show that the Dold-Thom attaching map $H_1:\hat{q}^{-1}(x)\to \hat{q}^{-1}(h_1(x))$ is a homotopy equivalence
for any $x\in Q.$
If we identify these fibers of $\hat{q}$ with $\hat{Q}^m_{n,\K}(B)$,
this attaching map can be regarded as the map
$\hat{Q}^m_{n,\K}(B)\to \hat{Q}^{m}_{n,\K}(B)$, and
 it is given by
$(\eta_1,\cdots ,\eta_m)\mapsto
(\eta_1,\cdots ,\eta_{k-1},\eta_k+x_0+\overline{x_0},\eta_{k+1},\cdots ,\eta_m).$
If we identify $\hat{Q}^m_{n,\K}(B)= \Z^m\times
Q^{\infty,m}_{n}(B)$, this map is   
the shift map  
on the component $(n_1,\cdots ,n_m)\mapsto
(n_1,\cdots ,n_{k-1},n_k+2,n_{k+1},\cdots ,n_m).$
Thus it induces a homology isomorphism for any local coefficients.
Since 
$\pi_1(Q^{\infty,m}_{n}(B))$ is an abelian group
by  Lemma \ref{lmm: abelian},
we see that this attaching map is a homotopy equivalence.
\end{proof}


\begin{definition}\label{def: 5.9}
(i)
Let $\mathcal{P}^d(\K)$ denote the space of all
(not necessarily monic) 
polynomials $f(z)=\sum_{i=0}^da_iz^i\in\K [z]$ of degree exactly $d$ 
and let  $\mathcal{P}oly^{d,m}_{n,\K}$
denote the space of all
$m$-tuples $(f_1(z),\cdots ,f_m(z))\in \mathcal{P}^d(\K )^m$
such that polynomials $\{f_1(z),\cdots ,f_m(z)\}$ have no common {\it real} root of
multiplicity $\geq n$.
\par
(ii)
For each nonempty subset $X\subset \C$, let
$\pol^{m}_{n,\K}(X)$
denote the space of
all $m$-tuples $(f_1(z),\cdots ,f_m(z))\in \K [z]^m$ of polynomials of the same degree such that polynomials $\{f_1(z),\cdots ,f_m(z)\}$ have no common real root in $X$ of multiplicity
$\geq n$,  and let $\mathcal{P}oly^{m}_{n,\K}(X)\subset \pol^{m}_{n,\K}(X)$ denote the subspace of $(f_1(z),\cdots ,f_m(z))\in \pol^{m}_{n,\K}(X)$ such that
no $f_i(z)$ is identically zero.
When $X=\C$, we write
\begin{equation} 
\mathcal{P}oly^{m}_{n,\K}=\mathcal{P}oly^{m}_{n,\K}(\C).
\end{equation}
\end{definition}

\begin{remark}
It is easy to see that there are homeomorphisms
$$
\mathcal{P}^d(\K)\cong \K^*\times\P^d(\K)
\ \mbox{ and }\ 
\mathcal{P}oly^{d,m}_{n,\K}\cong
\T^m_{\K} \times \Q^{d,m}_{n}(\K),
$$
where we set $\K^*=\K\setminus \{0 \}$ and 
$\T^m_{\K}=(\K^*)^m$.
\qed
\end{remark}
\par\vspace{2mm}\par

Next we consider horizontal scanning for algebraic maps.

\begin{definition}
(i)
We identify $\C=\R^2$ in a usual way.
Let
\begin{equation}
U=\{w\in\C :\vert \mbox{Re}(w)\vert <1, \ \vert\mbox{Im}(w)\vert <1\}=(-1,1)\times (-1,1)
\end{equation}
and 
define {\it the horizontal scanning map}
for  $\mathcal{P}oly^{d,m}_{n,\K}$,
\begin{align}\label{equ: scan2}
\mbox{sca}^{d,m}_n:&
\mathcal{P}oly^{d,m}_{n,\K}
\to \Map(\R, \mathcal{P}oly^{m}_{n,\K}(U))
\qquad
\mbox{by}
\\
\nonumber
\mbox{sca}^{d,m}_n&(f_1(z),\cdots ,f_m(z))(x)=
(f_1\vert V(x),\cdots ,f_m\vert V(x))
\end{align}
for $((f_1(z),\cdots ,f_m(z),x)\in \mathcal{P}oly^{d,m}_{n,\K}\times \R$,
where  $V(x)$ denotes the space given by
(\ref{eq: Vx}) and
 we also use the canonical
identification $U\cong V(x)$.
\par
(ii)
Let us identify $\overline{U}=I^2$ and let
$
q:\mathcal{P}oly^m_{n,\K}(U)\to 
Q^{m}_{n,\K}(I^2,\partial I\times I)
$
denote the map given by assigning to an $m$-tuples of polynomials their
corresponding roots in $U$. 
\qed
\end{definition}

\begin{lemma}\label{lemma: fiber}
Any fiber of the map $q$ is homotopy equivalent to the space
$\T^m_{\K}$.
\end{lemma}
\begin{proof}
Any fiber of the map $q$ is homeomorphic to the space
$F(m)$ consisting of all $m$-tuples
$(f_1(z),\cdots ,f_m(z))\in \K[z]^m$ of $\K$-coefficients polynomials such that
each polynomial $f_i(z)$ has no root in $U$.
It suffices to show that there is a homotopy equivalence
$F(m)\simeq \T^m_{\K}$.
First define the inclusion map $j_0:\T^m_{\K}\to F(m)$ by
$j_0(\textit{\textbf{x}})=(x_1,\cdots ,x_m)$
for $\textit{\textbf{x}}=(x_1,\cdots ,x_m)\in \T^m_{\K}$.
Next,
let $f=(f_1(z),\cdots ,f_m(z))\in F(m)$ be any element.
Since $0\in U$,
$(f_1(0),\cdots ,f_m(0))\in \T^m_{\K}$.
Hence, one can define the evaluation map
$e_0:F(m)\to \T^m_{\K}$
by
$e_0(f)=(f_1(0),\cdots ,f_m(0))$ for
$f=(f_1(z),\cdots ,f_m(z))\in F(m)$.
It is easy to see that $e_0\circ j_0=\mbox{id}_{\T^m_{\K}}.$
\par
Now consider the map $j_0\circ e_0$.
Note that if a polynomial $g(z)\in \K [z]$ has a root $\alpha \in \C\setminus U$ and
$0<t\leq 1$, the polynomial $g(tz)$ has a root $\alpha/t\in \C\setminus U$.
Thus, one can define the homotopy
$F:F(m)\times [0,1]\to F(m)$ by
$F(f,t)=(f_1(tz),\cdots ,f_m(tz))$
for $(f,t)=((f_1(z),\cdots ,f_m(z)),t)\in F(m)\times [0,1].$
It is easy to see that the map $F$ gives a homotopy between
the maps
$j_0\circ e_0$ and $\mbox{id}_{F(m)}$.
Hence, we see that the map $e_0:F(m)\stackrel{\simeq}{\longrightarrow}\T^m_{\K}$
is a homotopy equivalence.
\end{proof}

\begin{lemma}\label{lmm: quasi-fibration}
The map 
$q:\mathcal{P}oly^m_{n,\K}(U)\to 
Q^{m}_{n,\K}(I^2,\partial I\times I)
=Q^{m}_{n,\K}(\overline{U},\sigma \overline{U})$ 
is a quasifibration with fibre $\T^m_{\K}$.
\end{lemma}
\begin{proof}
As before we identify $\C =\R^2$.
The assertion may be proved by using the well-known criterion of Dold-Thom.
The basic idea, which was first applied to the study of spaces pairs of polynomials 
 without common roots, is due to \cite[Lemma 3.3]{Se}.  Our argument is almost identical to the one given for the case $m=1$ and complex polynomials  without $n$-fold roots in \cite{GKY2}. As this argument is somewhat easier to grasp, we begin by briefly repeating it. 

For a non-empty open subset $X\subset \Bbb C$,  let $SP_n(X)$ denote the space of all complex (not necessarily monic) polynomial functions $f(z)=\sum a_i z^i$ such that every root of $f(z)$ in $X$ has multiplicity less than $n$.
The set $SP_n(X)$ is topologized as the subspace of the space of all polynomials (which can be identified with the space of all elements of the infinite cartesian product $\Bbb C^\infty$ with only finitely many non-zero terms).
 Note that $SP_n(\Bbb C)$, although bijectively equivalent to the disjoint union 
$\coprod _{d\ge 0} SP_n^d(\Bbb C)$ of the spaces 
$SP^d_n(\C)$ of complex polynomials of degree exactly $d$,  is connected. For example the degree one polynomial  $a z - b$ can be connected to the degree zero polynomial  $-b$ by the homotopy $t a z-b$, where $t \in [0,1]$. 
\par
Let $V=\{x\in \C: |x|<1\}$ and let
$\text{SP}_n(\overline{V},\partial \overline{V})$ denote the space of all configurations of points in $\overline{V}$ without multiplicity larger than $n$, with two configurations identified if they differ only on the boundary (we can also think that points can enter the boundary and vanish).
\par
Consider the natural projection map $q: SP_n(V) \to \SP_n(\overline{V},\partial \overline{V})$.
In \cite{GKY2} this map is shown to be a quasifibration as follows. We filter the base space $\SP_n(\bar V,\partial \bar{V})$ by the number of  points in $V$. It is easy to see that over the each successive difference in this filtration the map $q$ is a locally trivial fibre bundle. We can now use the Dold-Thom method 
(see Lemma 3.3 and the proof of 
\cite[Proposition 3.2]{Se}) to show that $q$ is a quasifibration. The Dold-Thom attaching map 
on the fibre has the effect of multiplying polynomials with no roots in $V$ by a fixed polynomial $(z-\alpha)$, where $\alpha$ lies outside $V$. Since $\alpha$ can be moved continuously to $1$, the corresponding map on the fibre $\Bbb C^*$ is a homotopy equivalence. 

In the case of polynomials with real coefficients the argument is identical except that, since  non-real roots of real polynomials must occur in conjugate pairs, the Dold-Thom attaching map is this time multiplication by $(z-a) (z-\bar a)$, for some complex $a\notin V$. This map, of course, is also homotopic to the identity. 
\par\vspace{1mm}\par
Next, turning to the case of $m$-tuples of configurations $Q^{m}_{n,\K}(I^2,\partial I\times I)$, we use the essentially the same argument for proving Lemma \ref{lmm: quasi-fibration}. 
\par
The difference is that we are now dealing with $m$-tuples of configurations. To avoid complexities of notation we shall only consider the case $m=2$, and we write
$B=Q^{2}_{n,\K}(I^2,\partial I\times I).$
The argument is essentially given in \cite[Lemma 3.3]{Se}, but as it is only a sketch, we will give a more detailed argument. 
\par
Recall that
 the base space $B$ consists of pairs of divisors (or configurations)  $(\xi_1,\xi_2)$ without common real $n$-fold points in $\overline{U}\setminus \sigma \overline{U}$. We filter the base space $B$ by an increasing family of subspaces 
 $\{B_{\le d_1,\le d_2}\}$, where $B_{\le d_1,\le d_2}$ 
 denotes the space  of pairs $(\xi_1,\xi_2)\in B$ such that 
$\deg (\xi_i\cap \mathbb R \cap (\overline{U}\setminus \sigma \overline{U}))\le d_i$
for each $i=1,2$.
\par
We aim to prove that $q$ restricted to $B_{\le d_1,\le d_2}$ is a quasifibration
for each pair $(d_1,d_2)$ of posiitve integers. 
For each $0\le d\le d_1$, let $B_{d,\le d_2}$ denote the subspace of $B_{\le d_1,\le d_2}$ 
consisting of pairs $(\xi_1,\xi_2)$ for which
 $\deg (\xi_1 \cap \mathbb R \cap (\overline{U}\setminus \sigma \overline{U}))=d$. 
We want to prove that  the map $q$ restricted to this space is a quasifibration. 
We filter this space by subspaces $\{B_{d,\le d^{\p}}:0\le d^{\p} \le d_2\}$.  
Over each difference 
$B_{d,d^{\p}} = B_{d,\le d^{\p}} \setminus  B_{d,\le d^{\p}-1}$ the map $q$ is a locally trivial fibre bundle. We now use the Dold-Thom technique, exactly as in  
\cite[Lemma 3.3]{Se} to stitch these quasifibrations, to one over $B_{d,\le d_2}$. 
We do this for each $d\le d_1$ and then turn to $B_{\le d_1,\le d_2}$. 
We now filter it by subspaces  $\{B_{\le d,\le d_2}:d\le d_1\}$, where $d_2$ is now fixed. The differences are now precisely the spaces $B_{d,\le d_2}$, over which we have already proved that the map $q$ is a quasifibration. Repeating the Dold-Thom argument concludes the proof. 
The process in which we stitch a fibre bundle defined over an open subset and a quasifibration defined over a closed one, requires finding an open set containing the closed one and a suitable retraction that lifts to the total space and induces an isomorphism on fibres, exactly as in the proof of  Lemma \ref{lmm: quasifbration q}. 
\end{proof}

\begin{definition}
(i)
Let
$ev:\pol^{m}_{n,\K}(U)\to \K^{mn}\setminus \{{\bf 0}\}$
denote the evaluation map at $z=0$ given by
\begin{eqnarray}\label{eqna: ev}
ev(f_1(z),\cdots ,f_m(z))
&=&
(\textit{\textbf{f}}_1(0),\textit{\textbf{f}}_2(0)\cdots ,
\textit{\textbf{f}}_m(0))
\end{eqnarray}
for $(f_1(z),\cdots ,f_m(z))\in \pol^m_{n,\K}(U)$, and let
$ev_0:\mathcal{P}oly^{m}_{n,\K}(U)\to \K^{mn}\setminus \{{\bf 0}\}$ denote
the restriction $ev_0=ev\vert \mathcal{P}oly^{m}_{n,\K}(U).$
\par
(ii) Let $G$ be a group and $X$ a $G$-space.
Then we denote by $X//G$ the homotopy quotient of $X$ by $G$,
$X//G=EG\times_{G}X$, where
$EG$ denotes the contractible free $G$-space.
\end{definition}

\begin{remark}
Let $\T^m_{\K}=(\K^*)^m$ and
 consider the diagonal $\T^m_{\K}$-action on the spaces
$\mathcal{P}oly^m_{n,\K}(U)$ and $\K^{mn}\setminus \{{\bf 0}\}$ 
given by
$$
\begin{cases}
(g_1,\cdots ,g_m)\cdot (f_1(z),\cdots ,f_m(z))
&=
(g_1f_1(z),\cdots ,g_mf_m(z))
\\
(g_1,\cdots ,g_m)\cdot
(\textit{\textbf{x}}_1,\cdots ,\textit{\textbf{x}}_m)
&=
(g_1\textit{\textbf{x}}_1,\cdots ,g_m\textit{\textbf{x}}_m)
\end{cases}
$$
for $(g_1,\cdots ,g_m))\in \T^m_{\K}$ and
$\{\textbf{\textit{x}}_i\}_{i=1}^m\subset \K^n$.
Note that $ev_0$ is a $\T^m_{\K}$-equivariant map.
\end{remark}

\begin{lemma}\label{lmm: evaluation0}
The map
$ev_0:\mathcal{P}oly^m_{n,\K}(U)\stackrel{\simeq}{\longrightarrow}
 \K^{mn}\setminus \{{\bf 0}\}$
is a  homotopy equivalence.
\end{lemma}
\begin{proof}
For each $\textbf{\textit{x}}=(x_0,\cdots ,x_{n-1})\in \K^n$,
let $\varphi_{\textbf{\textit{x}}}(z)\in \K [z]$ denote the
polynomial with coefficients in $\K$ defined by
$\varphi_{\textbf{\textit{x}}}(z)
=x_0+\sum_{k=1}^{n-1}\big(\frac{x_k-x_0}{k!}\big)z^k.$
Since the degree of the polynomial 
$\varphi_{\textbf{\textit{x}}}(z)$ is at most $n-1$, 
it has no root of multiplicity $\geq n$.
Thus
one can define the natural inclusion map $i_0:\K^{mn}\setminus \{{\bf 0}\}\to
\pol^{m}_{n,\K}(U)$ by
$$
i_0(\textit{\textbf{x}}_1,\cdots ,\textit{\textbf{x}}_m)
=(\varphi_{\textbf{\textit{x}}_1}(z),\cdots ,
\varphi_{\textbf{\textit{x}}_m}(z))
\qquad
\mbox{for }\ 
(\textit{\textbf{x}}_1,\cdots ,\textit{\textbf{x}}_m)\in\K^{mn}\setminus
\{{\bf 0}\}.
$$
It is easy to see that $ev\circ i_0=\mbox{id}.$
Next consider the homotopy
$F:\pol^m_{n,\K}(U)\times [0,1]\to \pol^m_{n,\K}(U)$ defined by
$F((f_1(z),\cdots ,f_m(z)),t)=(f_1(tz),\cdots ,f_m(tz)).$
Then $F_1=F(\ ,1)=\mbox{id}$
and the map $F_0=F(\ ,0)$ is given by
$
F_0(f_1(z),\cdots ,f_m(z))=
(f_1(0),\cdots ,f_m(0)).
$
Note that for $f=(f_1(z),\cdots ,f_m(z))\in \pol^m_{n,\K}(U)$
$$
(i_0\circ ev)(f)
=
\Big(f_1(0)+\sum_{k=1}^{n-1}\frac{f^{(k)}_1(0)}{k!}z^k,
\cdots ,f_m(0)+\sum_{k=1}^{n-1}\frac{f^{(k)}_m(0)}{k!}z^k\Big).
$$
Define the homotopy
$G:\pol^m_{n,\K}(U)\times [0,1]\to \pol^m_{n,\K}(U)$ by
$$
G((f_1(z),\cdots ,f_m(z)),t)
=
\Big(f_1(0)+t\sum_{k=1}^{n-1}\frac{f^{(k)}_1(0)}{k!}z^k,
\cdots ,f_m(0)+t\sum_{k=1}^{n-1}\frac{f^{(k)}_m(0)}{k!}z^k\Big).
$$
Since this gives the homotopy between the maps
$F_0$ and $i_0\circ ev$,
the map $i_0\circ ev$ is homotopic to the identity map.
Thus, the map $ev$ is a homotopy equivalence. 
\par
Note that the complement
$\Sigma =\pol^m_{n,\K}(U)\setminus \mathcal{P}oly^m_{n,\K}(U)$
is the space of all $m$-tuples
$(f_1(z),\cdots ,f_m(z))\in \pol^m_{n,\K}(U)$ such that some
$f_i(z)$ is identically zero.
Note that $\pol^m_{n,\K}(U)$ is an infinite dimensional manifold and
that
$\Sigma$ is a finite union of linear subspaces, each of infinite codimension.
Then by using the induction on the number of  linear subspaces and by
\cite[Theorem 2]{EK}, we can show that
the inclusion $\mathcal{P}oly^m_{n,\K}(U)\to \pol^m_{n,\K}(U)$ is a
homotopy equivalence.
Thus the map $ev_0=ev\vert  \mathcal{P}oly^m_{n,\K}(U)$ is also a homotopy equivalence.
%
\end{proof}

\begin{definition}
Now recall the map
$j^{d,m}_{n,\K}:\Q^{d,m}_n(\K)\to \Omega_{[d]_2}\RP^{d(\K)mn-1}\simeq
\Omega S^{d(\K)mn-1}$ and the stabilized space
$\dis \Q^{\infty,m}_n(\K)=\lim_{d\to\infty}\Q^{d,m}_n(\K)$ given in
Definition \ref{def: stabilization}.
Since there is a homotopy commutative diagram
\begin{equation}
\begin{CD}
\Q^{d,m}_n(\K) @>j^{d,m}_{n,\K}>> \Omega S^{d(\K)mn-1}
\\
@V{s^{d,m}_{n,\K}}VV \Vert @.
\\
\Q^{d+1,m}_n(\K) @>j^{d+1,m}_{n,\K}>> \Omega S^{d(k)mn-1}
\end{CD}
\end{equation}
for each $d\geq 1$, these maps induces the following map
\begin{equation}\label{equ: natural map}
j^{\infty,m}_{n,\K}
:\Q^{\infty,m}_{n}(\K)
\to
\Omega S^{d(\K)mn-1}.
\end{equation}
\end{definition}

\begin{theorem}\label{thm: natural map}
If $d(\K)mn\geq 4$, the map
$
j^{\infty,m}_{n,\K}:\Q^{\infty, m}_{n}(\K)
\stackrel{\simeq}{\longrightarrow}
\Omega S^{d(\K)mn-1}
$
is a homotopy equivalence.
\end{theorem}


\begin{proof}
Note that the group $\T^m_{\K}$ acts on
$\mathcal{P}oly^m_{n,\K}(U)$ freely, but it
does not act on 
$\K^{mn}\setminus \{{\bf 0}\}$  freely. So we have to consider the homotopy quotient $(\K^{mn}\setminus \{{\bf 0}\})//\T^m_{\K}$ of the action.
Since $ev_0$ is a $\T^m_{\K}$-equivariant map, we obtain the following commutative diagram
$$
\begin{CD}
\T^m_{\K} @>>> \mathcal{P}oly^m_{n,\K}(U)
@>q_1>> \mathcal{P}oly^m_{n,\K}(U)/\T^m_{\K}
\\
\Vert @. @V{ev_0}V{\simeq}V @V{\widetilde{ev}_0}VV
\\
\T^m_{\K}  @>>> \K^{mn}\setminus\{{\bf 0}\}
@>q_2>> (\K^{mn}\setminus \{{\bf 0}\})//\T^m_{\K}
\end{CD}
$$
where two horizontal sequences are fibration sequences and
and each $q_i$ $(i=1,2)$  is natural projection induced from the group action.
Thus, we  see that $\widetilde{ev}_0$ is a homotopy equivalence.
If
 $q^{\p}:
 \mathcal{P}oly^{d,m}_{n,\K}/\T^m_{\K} 
 \to Q^m_{n,\K}(I^2,\partial I\times I)$ 
 denotes the induced map from the map $q$, then
it
is a  homotopy equivalence
by  Lemma \ref{lmm: quasi-fibration}.
Since $\dis\lim_{t\to\pm \infty}\mbox{sca}^{d,m}_n(f)(t)=
(\emptyset,\cdots ,\emptyset)$ for any $f\in Poly^{d,m}_{n,\K}$,
the map $\mbox{sca}^{d,m}_n$ can be extended to the based map
$\mbox{sca}^{d,m}_n:Poly^{d,m}_{n,\K}\to\Omega Poly^{d,m}_{n,\K}(U)$
by 
$\infty\mapsto \mbox{the constant loop at }(\emptyset,\cdots ,\emptyset )$,
where
we identify $S^1=\R\cup \infty$ and we choose the points $\infty$ and 
$(\emptyset,\cdots ,\emptyset)$ as base-points of $S^1$ and
$\Omega Poly^{d,m}_{n,\K}(U)$, respectively.
Now consider the following commutative diagram
{\small
$$
\begin{CD}
\mathcal{P}oly^{d,m}_{n,\K} 
@>\mbox{sca}^{d,m}_n>>
\Omega \mathcal{P}oly^m_{n,\K}(U)
@>\Omega ev_0>\simeq> 
\Omega (\K^{mn}\setminus\{{\bf 0}\})
\\
@V{q_3}VV @V{\Omega q_1}VV @V{\Omega q_2}VV
\\
\mathcal{P}oly^{d,m}_{n,\K}/\T^m_{\K} 
@>>> 
\Omega
(\mathcal{P}oly^m_{n,\K}(U)/\T^m_{\K})
@>\Omega \widetilde{ev}_0>\simeq> 
\Omega (\K^{mn}\setminus \{{\bf 0}\})//\T^{m}_{\K})
\\
@V{\cong}VV @V{q^{\p}}V{\simeq}V @.
\\
\Q^{d,m}_{n}(\K) @>sc^{d,m}_n>> 
\Omega 
Q^m_{n,\K}(I^2,\sigma I^2)
@.
\end{CD}
$$
}
\newline
where %
the map $q_3$ 
is induced from 
the corresponding group action.
Now consider the map $\gamma_{\K}$ given by the second row of the above diagram.
Since $sc^{d,m}_n$ is a homotopy equivalence if $d\to \infty$
(by Theorem \ref{thm: scanning map}), the map $\gamma_{\K}$
is a homotopy equivalence if $d\to\infty$.
\par
First, consider the case $\K =\R$.
Since $\T^m_{\R}=\{\pm 1\}^m$,
the two  maps $\Omega q_i$ $(i=1,2)$ are homotopy equivalences.
However, since
the map $\gamma_{\R}$ coincides the map $j^{d,m}_{n,\R}$
(if $d\to \infty$)
up to homotopy equivalence, the map
$j^{\infty,m}_{n,\R}$ is a homotopy equivalence.
\par
Next consider the case $\K =\C$.
Since $\Q^{d,m}_n(\C)$ is  simply connected by Lemma \ref{lmm: 1-connected},
the space $\Q^{\infty,m}_n(\C)$ is also simply connected.
Since $\Omega S^{2mn-1}$ is simply connected,
it suffices to prove that $j^{\infty,m}_{n,\C}$ induces an isomorphism on homotopy groups $\pi_k(\ )$ for any $k\geq 2$.
\par
Since $\T^m_{\C}\simeq (S^1)^m$, we see that
the two maps $\Omega q_i$ $(i=1,2)$ induce isomorphisms on homotopy groups
$\pi_k(\ )$ for any $k\geq 2$.
Since $\gamma_{\C}$ is a homotopy equivalence if $d\to\infty$,
the map $j^{\infty,m}_{n,\C}$ also induces an isomorphism on homotopy groups
$\pi_k(\ )$ for any $k\geq 2$.
\end{proof}
\section{Proof of the main results}\label{section: proofs}

In this section, we give the proofs of the main results
(Theorem \ref{thm: I}, Corollary \ref{cor: I-2} and
Theorem \ref{thm: II}).


\begin{proof}[Proof of Theorem \ref{thm: I}]
Suppose that $d(\K)mn\geq 4$.
Note that the two spaces $\Q^{d,m}_{n}(\K)$ and $\Omega S^{d(\K)mn-1}$ are simply
connected 
(by Lemma \ref{lmm: 1-connected}).
Hence, 
the assertion follows from Theorem \ref{thm: stab1} and
Theorem \ref{thm: natural map}.
%
\end{proof}

\begin{proof}[Proof of Corollary \ref{cor: I-2}]
It is easy to see that the following diagram is commutative:
$$
\begin{CD}
\Q^{d,m}_{n}(\K)
@>j^{d,m}_{n,\K}>> \Omega S^{d(\K)mn-1}%
\\
@V{i^{d,m}_{n,\K}}VV \Vert @.
\\
\Q^{d,mn}_{1}(\K) @>j^{d,mn}_{1,\K}>>  \Omega S^{d(\K)mn-1} 
\end{CD}
$$
By Theorem \ref{thm: I}
the two maps
$j^{d,m}_{n,\K}$ and $j^{d,mn}_{1,\K}$ are
homotopy equivalences through dimension
$D(d;m,n,\K)$ and $D(d;mn,1,\K)$, respectively.
Since $D(d;m,n,\K)<D(d;mn,1,\K)$, it follows from the above commutative diagram that
the map
$i^{d,m}_{n,\K}$ is a homotopy equivalence through dimension $D(d;m,n,\K)$.
\end{proof}

\begin{proof}[Proof of Theorem \ref{thm: II}]
Suppose that $d(\K)mn\geq 4$.
It suffices to prove the first assertion.
Since $\Q^{d,m}_{n}(\K)$ is simply connected (by Lemma \ref{lmm: 1-connected}),
it follows from Corollary \ref{crl: homology} and the cellular approximation theorem that
there is a map
$
f:\Q^{d,m}_{n}(\K)\to J_{\lfloor\frac{d}{n}\rfloor}(\Omega S^{d(k)mn-1})
$
such that the following diagram is homotopy commutative:
$$
\begin{CD}
\Q^{d,m}_{n}(\K) @>j^{d,m}_{n,\K}>> \Omega S^{d(\K)mn-1}
\\
@V{f}VV \Vert @.
\\
J_{\lfloor\frac{d}{n}\rfloor}(\Omega S^{d(k)mn-1})
@>i>\subset> \Omega S^{d(\K)mn-1}
\end{CD}
$$
where $i$ is the natural inclusion map.
Since the maps $i^{d,m}_{n,\K}$ and  $i$ are homotopy equivalences through dimension
$D(d;m,n,\K)$, 
the map $f$ is a homotopy equivalence through dimension $D(d;m,n,\K)$, too.
Note that $H_k(\Q^{d,m}_{n,\K};\Z)=
H_k(J_{\lfloor\frac{d}{n}\rfloor}(\Omega S^{d(k)mn-1});\Z)=0$ for any
$k>D(d;m,n,\K)$. Hence, the map $f$ is a homology equivalence.
Since the spaces $\Q^{d,m}_{n}(\K)$ and $J_{\lfloor\frac{d}{n}\rfloor}(\Omega S^{d(k)mn-1})$
are simply connected, the map $f$ is indeed a homotopy equivalence.
\end{proof}
\paragraph{Acknowledgements.}
The authors are very grateful to the referee for his vey useful suggestions and 
comments.
The second author was supported by 
JSPS KAKENHI Grant Number 26400083 and 18K03295.
This work was also supported by the Research Institute for Mathematical Sciences, a Joint Usage/Research Center located in Kyoto University.

%
%
%
%
%
%


\end{document}